\newtheorem{Th}{Theorem}
\newtheorem{Pp}[Th]{Proposition}
\newtheorem{Le}[Th]{Lemma}
\newtheorem{Co}[Th]{Corollary}
\newtheorem{Cj}[Th]{Conjecture}
\newtheorem{Rm}[Th]{Remark}
\theoremstyle{remark}
\newcommand{\dom}{\trianglerighteq}
\newcommand{\sdom}{\triangleright}
\newcommand{\ndom}{\ntrianglerighteq}
\newcommand{\partof}{\vdash}
\newcommand{\innp}[2]{\langle#1,#2\rangle}
\newcommand{\wssp}{\mathcal{SSP}}
\newcommand{\wsp}{\mathcal{SP}}
\newcommand{\clr}{\textcolor{red}}
\newcommand{\clg}{\textcolor{green}}
\newcommand{\clb}{\textcolor{blue}}
\begin{document}

\title{The Schur Cone and the Cone of Log Concavity}

\author{Dennis E. White}
\address[Dennis E.~White]{School of
Mathematics, University of Minnesota\\
127 Vincent Hall, 206 Church St SE\\
Minneapolis, MN 55455--0488}
\email{white@math.umn.edu}
\urladdr{http://www.math.umn.edu/~white}

\date{\today}

\begin{abstract}
Let $\{h_1,h_2,\dots\}$ be a set of algebraically
independent variables.  We ask which vectors
are extreme in the cone generated by
$h_ih_j-h_{i+1}h_{j-1}$ ($i\geq j>0$)
and $h_i$ ($i>0$).  We call this cone
the \textit{cone of log concavity}.
More generally, we ask
which vectors are extreme in the
cone generated by Schur functions
of partitions with $k$ or fewer parts.
We give a conjecture characterizing which vectors are extreme
in the cone of log concavity.  We prove this characterization
in one direction and give partial results in the other direction.
\end{abstract}

\maketitle

\section{Introduction, partitions and symmetric functions}\label{S:parts}

Let $\{h_1, h_2, \dots\}$ be a set
of algebraically independent variables.
We ask which polynomials in these
variables can be written as positive sums of products of
polynomials of the form $h_ih_j-h_{i+1}h_{j-1}$ ($i\geq j>0$)
and $h_i$ ($i>0$).  
Such sums of products form a cone inside the algebra generated by these
variables.  We call this cone the \textit{cone of log concavity}.  It is natural
to ask which of the generating vectors of this cone are extreme and which
are not.  That is, which can be written as positive linear combinations of the others
and which are required to define the cone.

We can view the $h_i$ as the homogenous symmetric functions in a set of indeterminates
$\{x_1,x_2,\dots\}$.  In this setting, the monomials of products of the $h_i$ are
a basis for the vector space of symmetric functions, and the monomials of homogenous
degree $N$ are a basis for the vector space of symmetric functions of that degree.  
Since the variables $\{h_1,h_2,\dots\}$ are algebraically independent, we will
never have to refer to the underlying indeterminates $\{x_1,x_2,\dots\}$.

Having placed our problem in the context of symmetric functions, we need some 
preliminary definitions and results
concerning partitions and symmetric functions.
This material may be found in many other sources, most notably
in~\cite{Macd} and in~\cite{Stan2}.

If $\lambda=(\lambda_1,\lambda_2,\dots,\lambda_m)$ with integers
$\lambda_1\geq\lambda_2\geq\dots\geq\lambda_m>0$
and $N=\lambda_1+\lambda_2+\dots+\lambda_m$, then
$\lambda$ is called a \textit{partition of $N$}, and
we write $\lambda\partof N$ and $|\lambda|=N$.  The integers $\lambda_i$
are called the \textit{parts} of the partition and $m=l(\lambda)$
is the \textit{number of parts}.  Another common notation for
partitions uses an exponential form.  If the part $k$ appears
$t_k$ times in the partition, we write $k^{t_k}$.  Thus the partition
of $18$, $(4,4,2,2,2,1,1,1,1)$, can be written $4^22^31^4$.

Let $\mathcal P_N$ be the set of partitions of $N$, 
$\mathcal P_N^k$ be the set of partitions of $N$ with $k$ or fewer parts
and $\mathcal P^k$ be the set of partitions with $k$
or fewer parts.  Let $p(N)$ be the size of $\mathcal P_N$.

A partition $\lambda$ is sometimes called a \textit{shape}, especially
when it is described by a \textit{Ferrers diagram}, an array of 
left-justified cells with
$\lambda_1$ cells in the first row, $\lambda_2$ cells in the second
row, etc.

If $\lambda\partof N$ and $\mu\partof N$, we say $\lambda$ \textit{dominates}
$\mu$ if $\lambda_1+\dots+\lambda_i\geq\mu_1+\dots+\mu_i$ for
all $i$ and we write $\lambda\dom\mu$ (and $\lambda\sdom\mu$ if
$\lambda\dom\mu$ and $\lambda\neq\mu$).  In these partial sums, if one partition has
more parts than the other, we pad with
parts of size $0$ as necessary.  Dominance determines a
partial order on $\mathcal P_N$.

If positive integers are placed in
the cells of the shape $\lambda$, the resulting
figure is called a \textit{tableau}.  The \textit{content} of a tableau
is a vector $\rho=(\rho_1,\rho_2,\dots)$ where $\rho_i$ is the number
of $i$'s in the tableau.  Vectors such as $\rho$ are called
\textit{compositions}.

If the entries of the tableau weakly increase across rows and strictly increase
down columns, the tableau is called a \textit{semistandard Young tableau}, or
\textit{SSYT}.  The number
of SSYT of shape $\lambda$ and content $\rho$ is
$K_{\lambda,\rho}$, called the \textit{Kostka number}.  A well-known
property of SSYT is that $K_{\lambda,\rho}$ does
not depend upon the order of the entries in the vector $\rho$,
so $\rho$ is usually assumed to be a partition.  In the next section, we shall
describe a  bijection on SSYT which proves this
property.

We often encounter shapes more general than partitions.  Given partitions
$\lambda$ and $\mu$, we say
$\mu\leq\lambda$ if each $\mu_i\leq\lambda_i$. Write $\lambda/\mu$
to denote the diagram obtained by removing the cells of the Ferrers diagram
of $\mu$ from
the cells of the Ferrers diagram of $\lambda$. This diagram is called a \textit{skew shape}, and
the idea of a SSYT extends naturally to skew shapes.

If $T$ is a (possibly skew) SSYT, then $w(T)$, called the
\textit{word of $T$}, is the word obtained by reading the entries
in $T$ from right to left across the first (top) row, then right
to left across the second row, etc.  If $\alpha$ is
a subset of the letters appearing in $T$, then $w_{\alpha}(T)$
is the subword of $w(T)$ which uses just letters in $\alpha$.

A word, using the letters $t_1<t_2<\dots<t_p$,
is a \textit{lattice word} if, at any point in the word
(reading left to right),
the number of $t_i$'s which have appeared is $\geq$ the number
of $t_{i+1}$'s which have appeared.

As mentioned earlier, the $\{h_1,h_2,\dots\}$ described above are usually defined
to be the homogeneous symmetric functions in some set of indeterminates
$x_1,x_2,\dots$.  In this paper we will never need to refer
to this underlying variable set.  The fact that the $h$'s are
algebraically independent gives us the freedom to move around
among symmetric function bases without regard to the underlying
set of indeterminates.

We write $h_{\rho}=h_{\rho_1}h_{\rho_2}\dots$, where
$\rho\partof N$.  The $h_{\rho}$, $\rho\partof N$,
form a basis of a vector space $\Lambda^N$
of dimension $p(N)$.

We will use another basis, the Schur functions $s_{\lambda}$, extensively.
We connect this basis with the $h_{\rho}$ in two ways.  The first
is the equation
$$
h_{\rho}=\sum_{\lambda\partof N}K_{\lambda,\rho}s_{\lambda}\,,
$$
where $\rho\partof N$.
The second is the Jacobi-Trudi identity,
$$
s_{\lambda}=\det(h_{\lambda_i-i+j})_{1\leq i,j\leq n}
$$
where $n\geq l(\lambda)$.

Symmetric functions which can be written in the Schur function
basis with integer coefficients are called \textit{Schur-integral};
symmetric functions which can be written in the Schur function basis
with non-negative coefficients are called \textit{Schur-positive}.

\section{The Littlewood-Richardson rule}\label{S:LRRule}

If two Schur functions are multiplied together, the resulting symmetric
function can be expanded as a linear combination of Schur functions.
More generally, suppose $(\rho^1,\rho^2,\dots,\rho^k)$ is a vector
of partitions of $n_1,n_2,
\dots,n_k$ respectively and $N=n_1+n_2+\dots+n_k$.
Write
$$
\prod_{i=1}^k s_{\rho^i}=\sum_{\lambda\partof N} 
c_{\rho^1,\dots,\rho^k}^{\lambda} s_{\lambda}
$$
The coefficients
$c_{\rho^1,\dots,\rho^k}^{\lambda}$ are the well-known \textit{Littlewood-Richardson coefficients},
whose computation is described below. They are
non-negative integers, making the product of Schur functions both Schur-positive and
Schur-integral.

The Littlewood-Richard coefficients are computed as follows.
Let $\rho=\rho^1\vee\dots\vee\rho^k$ be the composition formed by
concatenating the parts of the partitions $\rho^1,\rho^2,\dots,\rho^k$.
For example, if $\rho^1=(2,2,1)$, $\rho^2=(4,2)$ and $\rho^3=(3,1)$, then
$\rho=(2,2,1,4,2,3,1)$.  We next form a SSYT, $T$, of shape $\lambda\partof N$ and content $\rho$.
We let $\alpha^i$ denote the subset of letters in $T$ corresponding
to the partition $\rho^i$.  In our example, $\alpha^1=\{\clr1,\clr2,\clr3\}$, $\alpha^2=\{\clb4,\clb5\}$
and $\alpha^3=\{\clg6,\clg7\}$.

Finally, we say $T$ \textit{is LW} if, for each $i$, $w_{\alpha^i}(T)$ is a lattice word.
For example, taking
$\rho$ as above and $\lambda=(5,4,4,2)$, this tableau $T$ is LW:

$$
T=\begin{array}{ccccc}
\clr1&\clr1&\clb4&\clb4&\clg6\\
\clr2&\clr2&\clb5&\clg6\\
\clr3&\clb4&\clg6&\clg7\\
\clb4&\clb5
\end{array}
$$

This is because the three words
$(\clr{1\,1\,2\,2\,3})$, $(\clb{4\,4\,5\,4\,5\,4})$ and $(\clg{6\,6\,7\,6})$ are each a lattice word.

\begin{Th}\label{Th:LRRule}
The coefficient $c_{\rho^1,\dots,\rho^k}^{\lambda}$ is  the number of SSYT
of shape $\lambda$, content $\rho$ which are LW.
\end{Th}

The Littlewood-Richardson rule has many proofs~\cite{Stan2}.  One (\cite{JaKe}) uses a well-known switching rule
which can also be used to prove the Kostka numbers are independent of the order of the content.
This rule, which is a rephrasing of the \textit{jeu de taquin} of Sch\"utzenberger~\cite{Stan2}, swaps a letter from
one alphabet through a tableau in a different alphabet. We make this more precise.

Suppose inside the (possibly skew) SSYT, $T$, the two letters $*$ and $0$ appear, with $0<*$ and no letter $x$
such that $0<x<*$.  (We say such $0$ and $*$ are \textit{contiguous} or \textit{appear contiguously}.)
We swap the order of $0$ and $*$ as follows.  Whenever $0$ and $*$ appear
in a column, we call them \textit{paired} and we swap the paired $0$ and $*$.  And in any row, we swap
the unpaired $0$'s with the unpaired $*$'s.  The resulting tableau, $r(T)$, will have the $0$'s and $*$'s occupying
the same set of cells, with multiplicities unchanged, but with $*<0$.

For example, if
$$
T=
\begin{array}{ccccccccccc}
&&&&&&&&0&0&0\\
&&&0&0&0&0&*&*&\\
0&*&*&*&*\\
*
\end{array}
$$
then
$$
r(T)=
\begin{array}{ccccccccccc}
&&&&&&&&*&0&0\\
&&&*&*&*&0&0&0&\\
*&*&*&0&0\\
0
\end{array}
$$

By iterating this process, two alphabets can be made to swap positions.  That is, if $T$ is a skew SSYT of shape $\lambda/\mu$
which uses two alphabets, $\alpha$ and $\beta$, with the $\alpha$
alphabet less than the $\beta$ alphabet (written $\alpha<\beta$), then
repeatedly passing letters from one alphabet through the other gives a second skew SSYT, $S$, of shape $\lambda/\mu$,
using the same alphabets, but with $\beta<\alpha$.

Furthermore, certain properties of these alphabets are maintained after this swapping.  Write $S=r_{\beta<\alpha}(T)$
and $T=r_{\alpha<\beta}(S)$ to represent this swapping, and let $T_{\alpha}$ denote the skew subtableau of $T$ which uses
only the alphabet $\alpha$.  We have the following theorem, which appears in~\cite{BSS}:
\begin{Th}\label{Th:swapprop}
If $S=r_{\beta<\alpha}(T)$, then $T_{\alpha}$ is LR if and only if $S_{\alpha}$ is LR and $T_{\beta}$ is LR if and only if
$S_{\beta}$ is LR.
\end{Th}

\section{The cone of log-concavity}\label{S:cone}

If $A$ is a multiset from $\mathcal P^k$, define
$$
wt(A)=\sum_{\lambda\in A}|\lambda|
$$
and
$$
s_A=\prod_{\lambda\in A}s_{\lambda}\,.
$$
The homogeneous degree of $s_A$ (as a polynomial in
the $h$'s) is $wt(A)$.  Define
$$
\wsp_N^k = \{A\mid wt(A)=N\}\,.
$$

The $(N, k)$-\textit{Schur cone} is
$$
\mathcal C_N^k=\bigl\{\sum_{A\in\wsp_N^k}
c_A s_A\mid c_A\geq0\bigr\}
$$

A function $s_A$, $A\in\wsp_N^k$ is \textit{extreme} in
$\mathcal C_N^k$
if it cannot be written as a positive linear combination of
other $s_B$, $B\in\wsp_N^k$.
We ask, for a given $k$, which elements $A\in\wsp_N^k$ yield
$s_A$ which are extreme in this cone.

We distinguish two obvious special cases.  When $k=1$, $s_A=h_{\lambda}$
where $\lambda$ is the partition whose parts are the 1-row partitions of
$A$.  Since the $h_{\lambda}$ form a basis of $\Lambda^N$
and are the only vectors
defining $\mathcal C_N^1$, they are the extreme vectors.

When $k\geq N$, then since the product of Schur functions is Schur-positive,
the Schur functions $s_{\lambda}$ are the extreme vectors.

It follows from the Jacobi-Trudi identity that the cone $\mathcal C_N^2$
consists of positive linear combinations of products of factors
of the form
$$
h_i h_j-h_{i+1}h_{j-1}\quad\text{and}\quad h_i\quad i\geq j\geq1\,.
$$

Thus, we call $\mathcal C_N^2$ the \textit{cone of log concavity}.

There are many elements $A\in\wsp_N^2$ which are not
extreme in $\mathcal C_N^2$.  For example,
$$
s_{(3,1)}s_{(2)} = s_{(3,2)}s_{(1)} + s_{(1,1)}s_{(4)}\,.
$$
In fact, the extreme set of $\mathcal C_6^2$ is just these $13$ elements:
\begingroup
\renewcommand*{\arraystretch}{1.4}
$$
\begin{array}{lll}
s_{(6)}&s_{(4)}s_{(1,1)}&s_{(3)}s_{(2,1)}\\
s_{(5,1)}&s_{(3,1)}s_{(1,1)}&{s_{(2,1)}}^2\\
s_{(4,2)}&s_{(2,2)}s_{(2)}&s_{(2)}{s_{(1,1)}}^2\\
s_{(3,3)}&s_{(2,2)}s_{(1,1)}&{s_{(1,1)}}^3\\
s_{(3,2)}s_{(1)}
\end{array}
$$
\endgroup
In this paper we conjecture a simple characterization of the extreme
elements of $\wsp_N^2$.
We give
a proof of this conjecture in one direction and we prove an important special case
in the other direction.

\section{The extreme set}\label{S:extreme}

The conjectured characterization of the extreme elements of $\mathcal C_N^2$
is the following.
\begin{Cj}\label{Cj:firstconj}
The collection of pairs $A\in \wsp_N^2$ is in the extreme set of
$\mathcal C_N^2$
if and only if no pair of partitions $\{\lambda,\mu\}$ in $A$ satisfies
any one of the following conditions:
\begin{enumerate}
\item
$\lambda=(\lambda_1\geq\lambda_2>0)$,
$\mu=(\mu_1\geq\mu_2>0)$, with
$$
\lambda_1>\mu_1\geq\lambda_2>\mu_2\,;
$$
\item
$\lambda=(\lambda_1>\lambda_2>0)$,
$\mu=(\mu_1>0)$, with
$$
\lambda_1\geq\mu_1\geq\lambda_2\,;
$$
\item
$\lambda=(\lambda_1>0)$,
$\mu=(\mu_1>0)$.
\end{enumerate}
\end{Cj}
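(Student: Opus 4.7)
The plan is to prove the biconditional in two directions: a ``decomposition'' direction (any pair satisfying (i), (ii), or (iii) makes $s_A$ non-extreme) and an ``extremeness'' direction (no pair satisfies any of them implies $s_A$ is extreme). The first I expect to be largely algebraic; the second will be the main obstacle, requiring a combinatorial uniqueness argument.

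For the decomposition direction I would exhibit, for each of the three conditions, a Schur-positive identity rewriting $s_\lambda s_\mu$ as a non-negative integer combination of products of pairs in $\wsp_N^2$. Condition (iii) is immediate from Jacobi--Trudi: for $i \geq j \geq 1$, $h_i h_j = s_{(i,j)} + h_{i+1}h_{j-1}$, which in the language of $\wsp_N^2$ reads $\{(i),(j)\} = \{(i,j)\} + \{(i+1),(j-1)\}$. For condition (ii), the example $s_{(3,1)}s_{(2)} = s_{(3,2)}s_{(1)} + s_{(1,1)}s_{(4)}$ suggests and lets one guess the general identity
$$
s_{(\lambda_1,\lambda_2)}s_{(\mu_1)} = s_{(\lambda_1,\mu_1)}s_{(\lambda_2)} + s_{(\lambda_2,\lambda_2)}s_{(\lambda_1+\mu_1-\lambda_2)},
$$
verifiable by Pieri expansion of both sides (which I have checked on $\lambda=(4,2), \mu=(3)$). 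Condition (i) calls for an analogous two-row Plücker-type quadratic relation between two honest two-part partitions, to be discovered and then verified using the Littlewood--Richardson rule. One then applies these identities iteratively to strip away pairs satisfying (i)--(iii), showing that any reducible $A$ decomposes into a positive combination of (eventually) irreducible multisets, hence $A$ itself is not extreme.

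For the converse I would try a leading-term argument. Fix a linear refinement of dominance, and for each $B \in \wsp_N^2$ let $\tilde\lambda(B)$ be the maximum partition with $s_{\tilde\lambda(B)}$ appearing in the Schur expansion of $s_B$ (well defined since $s_B$ is Schur-positive). For an irreducible $A$ (no pair in $A$ meets (i)--(iii)), the goal is to show both that $s_{\tilde\lambda(A)}$ occurs with coefficient one in $s_A$, and that $\tilde\lambda(B) \ne \tilde\lambda(A)$ for every other irreducible $B \in \wsp_N^2$. These together would force any positive combination $s_A = \sum c_B s_B$ to be trivial, since any reducible $B$ on the right can itself be straightened by the identities above and the leading term then would have to be matched by some irreducible summand. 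The main obstacle is precisely the uniqueness of the leading partition: irreducibility of $A$ forces its parts into a specific ``interlocking'' configuration, but when $A$ mixes one-part and two-part partitions (the case governed by (ii)), different irreducible multisets may share the same leading Schur function unless finer statistics of the Schur expansion are tracked. I would therefore expect a complete proof to proceed by induction on $N$, or on the number of conflicts removable from $A$, and to cover the all-two-row case (presumably the ``important special case'' alluded to in the abstract) via the leading-term method before attacking the genuinely mixed case.
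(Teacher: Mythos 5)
The statement you are addressing is stated in the paper as a \emph{conjecture}: the paper proves only the forward direction (not nested implies not extreme) and, for the converse, only the special case where $\phi(A)$ has distinct parts, verifying the rest computationally for $N\leq 20$. So the converse half of your plan would have to be a complete new argument, and as sketched it is not one.

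Two concrete problems. First, in the easy direction your identity for condition (ii) is false in general: it holds only when $\mu_1=\lambda_2+1$, which both of your test cases $(\lambda,\mu)=((3,1),(2))$ and $((4,2),(3))$ happen to satisfy. For $\lambda=(4,1)$, $\mu=(3)$ your formula claims $s_{(4,1)}s_{(3)}=s_{(4,3)}s_{(1)}+s_{(1,1)}s_{(6)}$, but the right-hand side omits $s_{(6,2)}+s_{(5,2,1)}$. The correct syzygies, read off from Jacobi--Trudi, are $s_{\lambda}s_{\mu}=s_{(\lambda_1)}s_{(\mu_1,\lambda_2)}+s_{(\lambda_2-1)}s_{(\lambda_1,\mu_1+1)}$ when $\lambda_1>\mu_1$ and $s_{\lambda}s_{\mu}=s_{(\lambda_2)}s_{(\lambda_1,\mu_1)}+s_{(\lambda_1+1)}s_{(\mu_1-1,\lambda_2)}$ when $\mu_1>\lambda_2$; for condition (i), the relation you leave ``to be discovered'' is $s_{\lambda}s_{\mu}=s_{(\lambda_1,\mu_2)}s_{(\mu_1,\lambda_2)}+s_{(\lambda_1,\mu_1+1)}s_{(\lambda_2-1,\mu_2)}$. (Note also that a single application of one syzygy already exhibits $s_A$ as a positive combination of other generators of $\mathcal C_N^2$, so no iteration down to irreducibles is needed for non-extremeness.) Second, and more seriously, the key claim of your converse argument is false, not merely unproved: distinct nested multisets can share the same leading Schur function with coefficient one. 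For example $A=\{(2,1),(2,1)\}$ and $B=\{(4,2)\}$ are both nested, and the dominance-maximal term of $s_{(2,1)}^2$ is $s_{(4,2)}$ with coefficient $1$, identical to the unique term of $s_B$; so the leading term neither has a well-defined owner among nested multisets nor forces triviality of a positive relation. (The dominance-\emph{minimal} term $\phi(A)$ fails for the same reason: Catalan-many nested $A$ share it.) The paper's route to the converse is entirely different: it invokes Farkas' Lemma, seeking for each nested $A$ a symmetric function $f$ with $\innp{f}{s_A}>0$ and $\innp{f}{s_B}\leq 0$ for all other generators, and constructs $f$ by an iterative correction down the dominance poset starting from a function supported on the interval $[\phi(A),\phi(A)^+]$ --- and even this succeeds only when $\phi(A)$ has distinct parts.
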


If no pair of partitions in $A$ satisfies any of these conditions, we say $A$ is
\textit{nested}.  The proof of one direction is easy.

\begin{Th}\label{Th:conjonedir}
If $A$ is not nested then $A$ is not in the extreme set of
$\mathcal C_N^2$.
\end{Th}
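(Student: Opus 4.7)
The plan is to reduce the problem to a single pair $\{\lambda,\mu\}$ and then exhibit an explicit two-term positive decomposition of $s_\lambda s_\mu$ in each of the three offending cases. Concretely, if $A = \{\lambda,\mu\} \sqcup A'$ as a multiset and I can write $s_\lambda s_\mu = s_B + s_C$ with $B, C \in \wsp^2$ and $B, C \neq \{\lambda,\mu\}$, then multiplying through by $s_{A'}$ yields $s_A = s_{A' \sqcup B} + s_{A' \sqcup C}$, a positive decomposition of $s_A$ as a sum of two other elements of $\wsp_N^2$, so $A$ is not extreme.

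For case (iii) with $\lambda = (a), \mu = (b), a \geq b \geq 1$, Pieri's rule immediately gives $s_{(a)} s_{(b)} = \sum_{k=0}^{b} s_{(a+b-k, k)}$, a sum of $b+1 \geq 2$ single-partition Schur functions, any two of which play the role of $B$ and $C$. For case (i), and for the sub-case $\lambda_1 > \mu_1$ of case (ii), I propose the ``swap'' identity
\[
s_{(\lambda_1, \lambda_2)} s_{(\mu_1, \mu_2)} = s_{(\lambda_1, \mu_2)} s_{(\mu_1, \lambda_2)} + s_{(\lambda_1, \mu_1+1)} s_{(\lambda_2-1, \mu_2)},
\]
where case (ii) is recovered by the specialization $\mu_2 = 0$ together with the convention $s_{(0)} = 1$. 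The strict inequalities $\lambda_1 > \mu_1 \geq \lambda_2 > \mu_2$ ensure that each index pair on the right is a valid partition of at most two positive parts and that neither right-hand product coincides with $\{\lambda,\mu\}$. For the boundary sub-case of (ii) with $\lambda_1 = \mu_1$ the identity above degenerates (since $(\lambda_1, \mu_1+1)$ is no longer a partition), so I would use instead
\[
s_{(\lambda_1, \lambda_2)} s_{(\lambda_1)} = s_{(\lambda_1, \lambda_1)} s_{(\lambda_2)} + s_{(\lambda_1+1)} s_{(\lambda_1-1, \lambda_2)}.
\]

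Each identity is verified by substituting the two-row Jacobi--Trudi relation $s_{(a,b)} = h_a h_b - h_{a+1} h_{b-1}$ on both sides and comparing coefficients in the algebraically independent $h_i$. For the swap identity, the eight cross monomials on the right pair up by commutativity of the $h_i$ to reproduce the four monomials of the left. For the boundary identity, the difference of the two sides reduces after one more Jacobi--Trudi step to $h_{\lambda_1+1}(h_{\lambda_1-1} h_{\lambda_2} - h_{\lambda_1} h_{\lambda_2-1}) = h_{\lambda_1+1}\, s_{(\lambda_1-1, \lambda_2)}$, which is precisely the extra right-hand term. The main obstacle is not these verifications but rather locating the correct identities; in particular, recognizing that case (ii) does not admit a single uniform formula, since the natural ``swap'' of case (i) breaks at the boundary $\lambda_1 = \mu_1$ and must be replaced there by a separate boundary identity.
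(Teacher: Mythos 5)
Your proposal is correct and follows essentially the same route as the paper: your ``swap'' identity is exactly the paper's syzygy~\eqref{eq:syzygy1} (whose $\mu_2=0$ specialization is~\eqref{eq:syzygy2}), your boundary identity for $\lambda_1=\mu_1$ is the paper's~\eqref{eq:syzygy3} specialized to $\mu_1=\lambda_1$, and your use of Pieri's rule in case (iii) is a harmless multi-term variant of~\eqref{eq:syzygy4}. The reduction step of multiplying the pair identity by $s_{A'}$ is left implicit in the paper but is precisely what is intended.
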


\begin{proof}
Suppose a pair $\{\lambda,\mu\}$ satisfies the first condition.
This implies $\lambda_1\geq \mu_1+1$ and $\lambda_2-1\geq \mu_2$.
Therefore, by Jacobi-Trudi,
\begin{equation}\label{eq:syzygy1}
s_{\lambda}s_{\mu}=s_{(\lambda_1,\mu_2)}s_{(\mu_1,\lambda_2)}
+s_{(\lambda_1,\mu_1+1)}s_{(\lambda_2-1,\mu_2)}\,.
\end{equation}

Suppose a pair $\{\lambda,\mu\}$ satisfies the second condition.
If $\lambda_1>\mu_1$ then by Jacobi-Trudi,
\begin{equation}\label{eq:syzygy2}
s_{\lambda}s_{\mu}=s_{(\lambda_1)}s_{(\mu_1,\lambda_2)}+
s_{(\lambda_2-1)}s_{(\lambda_1,\mu_1+1)}\,.
\end{equation}
If $\mu_1>\lambda_2$, by Jacobi-Trudi
\begin{equation}\label{eq:syzygy3}
s_{\lambda}s_{\mu}=s_{(\lambda_2)}s_{(\lambda_1,\mu_1)}+
s_{(\lambda_1+1)}s_{(\mu_1-1,\lambda_2)}\,.
\end{equation}

Finally, suppose a pair $\{\lambda,\mu\}$ satisfies the third condition.
Then
\begin{equation}\label{eq:syzygy4}
s_{\lambda}s_{\mu}=s_{(\lambda_1,\mu_1)}+s_{(\lambda_1+1)}s_{(\mu_1-1)}\,.
\end{equation}
\end{proof}

Let $\wssp_N$ denote the nested sets $A\in \wsp_N^2$.
Thus, the extreme set of
$\mathcal C_N^2$
is contained in $\wssp_N$.

For $A\in \wsp_N^2$, let $\phi(A)$ be the partition defined by the parts
of the partitions in $A$.
For example, if $A=\{(4,2),(3,1),(3,2),(2)\}$, then $\phi(A)=43^22^31$.

Several $A\in \wsp_N^2$ might have the same $\phi(A)$.  For $\lambda\vdash
N$, let $\wsp_{\lambda}^2=\{A\in \wsp_N^2\mid\phi(A)=\lambda\}$.  For example,
if $\lambda=42^21$, then $\{(4,2),(2,1)\}$ and $\{(4,1),(2,2)\}$ are both
elements of $\wsp_{\lambda}$.
Similarly, define $\wssp_{\lambda}=\wsp_{\lambda}^2\cap \wssp_N$.  Note that
for every $\lambda$, $\wssp_{\lambda}\neq\emptyset$.

\begin{Rm}\label{Rm:evenodd}
For $A\in\wssp_{\lambda}$,
if $\lambda$ has an even number of parts, then all the partitions of $A$
have two parts, while if $\lambda$ has an odd number of parts, then
exactly one partition of $A$ will have one part (and the remaining
partitions in $A$ will have two parts).
\end{Rm}

Note that when $s_A$ is expanded in Schur functions,
its support lies above $\phi(A)$ in dominance order, and its coefficients
are non-negative.  This is a consequence of the 
Littlewood-Richardson rule.

\begin{Pp}\label{Pp:domprop}
If $A\in \wsp_N^2$, then
$$
s_A=\sum_{\mu\dom\phi(A)}c_A^{\mu}s_{\mu}\,,
$$
with $c_A^{\phi(A)}=1$ and $c_A^{\mu}\geq 0$.
\end{Pp}

Our primary tool in proving elements in $\wssp_N$ are extreme
in $\mathcal C_N^2$ is the well-known Farkas' Lemma (see~\cite{Schr}).
Farkas' Lemma states that a vector $\mathbf v$ is extreme in a cone
if and only if there is a separating hyperplane, i.e., a hyperplane $P$
such that $\mathbf v$ lies on one side of $P$ and all other generating
vectors lie on the other side of $P$.

Since we are working in $\Lambda^N$ and
using the Schur functions as our basis, it is natural to determine separating hyperplanes
by using the standard symmetric function inner product $\innp{\cdot}{\cdot}$
for which the Schur functions are orthonormal.

Suppose
$A\in \wssp_N$ and let $f$ be a symmetric function
such that $\innp{f}{s_B}\leq 0$ for $B\in \wssp_N$, $B\neq A$, and
$\innp{f}{s_A}>0$.  Then we
say $f$ \textit{separates} $A$. Restating Farkas' Lemma in our context:

\begin{Th}\label{Th:Farkas}
There is a symmetric function $f$ which separates $A$ for $A\in \wssp_N$
if and only if $A$ is extreme in $\mathcal C_N^2$.
\end{Th}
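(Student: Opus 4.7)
The plan is to deduce this directly from Farkas' Lemma as stated above, after first replacing the given generating set $\{s_B : B \in \wsp_N\}$ of $\mathcal{C}_N^2$ by the sub-collection $\{s_B : B \in \wssp_N\}$. This replacement is what allows the conditions $\innp{f}{s_B} \leq 0$ in the definition of ``$f$ separates $A$'' to range only over nested $B$.

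To justify the reduction, I would first note that by Proposition~\ref{Pp:domprop} every $s_B$ is a non-zero vector with non-negative Schur coefficients, so $\mathcal{C}_N^2$ is a pointed polyhedral cone. The extreme rays of such a cone lie among its given generators, and Theorem~\ref{Th:conjonedir} exhibits, for each non-nested $B$, a representation of $s_B$ as a positive combination of \emph{other} $s_{B'}$. Hence no non-nested generator spans an extreme ray, and the extreme rays — which already suffice to generate the cone — all lie in $\{s_B : B \in \wssp_N\}$. Thus $\mathcal{C}_N^2$ equals the positive hull of this sub-collection.

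Both directions of the theorem then follow from Farkas' Lemma applied to this reduced generating set. For the forward direction, if $f$ separates $A$ but $s_A = \sum_{B \in \wssp_N,\ B \neq A} c_B s_B$ with $c_B \geq 0$, then taking the Schur inner product with $f$ yields $\innp{f}{s_A} = \sum c_B \innp{f}{s_B} \leq 0$, contradicting $\innp{f}{s_A} > 0$; hence $A$ is extreme. For the reverse direction, if $A$ is extreme, Farkas' Lemma provides a hyperplane in $\Lambda^N$ separating $s_A$ from the remaining nested generators, and since the Schur basis is orthonormal this hyperplane can be written as the zero set of $\innp{f}{\cdot}$ for a symmetric function $f \in \Lambda^N$ with the required inequalities, so $f$ separates $A$.

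The main (and essentially only) obstacle is the pointed-cone reduction in the second paragraph; once the nested sets are recognized as a generating set, the theorem becomes a direct invocation of Farkas.
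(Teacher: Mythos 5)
Your overall strategy is the paper's own: the paper states this theorem with no written proof, treating it as an immediate consequence of Farkas' Lemma, and your main contribution---reducing the generating set of $\mathcal C_N^2$ to $\{s_B : B\in\wssp_N\}$ via Theorem~\ref{Th:conjonedir} together with the pointedness of the cone (which follows from Schur-positivity, as you say) and Minkowski's theorem that a pointed polyhedral cone is the positive hull of its extreme rays---is exactly the right way to make the statement literally follow from Farkas given that ``separates'' only constrains the nested generators. This reduction neatly sidesteps the termination issue one would face by iterating the syzygies of Theorem~\ref{Th:conjonedir} directly (the two summands produced there need not themselves be nested).

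There is one loose end in your forward direction. The paper's definition of ``extreme'' requires that $s_A$ not be a positive combination of the \emph{other} generators $s_B$, $B\in\wsp_N$, $B\neq A$; you only rule out decompositions over $\wssp_N\setminus\{A\}$. Knowing that $\mathcal C_N^2$ is the positive hull of the nested generators does not immediately convert a hypothetical decomposition $s_A=\sum_{B\in\wsp_N,\,B\neq A}c_Bs_B$ into one over $\wssp_N\setminus\{A\}$, because when the non-nested $s_B$ are rewritten over $\wssp_N$ the term $s_A$ itself may reappear, say with coefficient $c'_A$. Pairing with your separating $f$ only yields $c'_A\geq1$ in that case, not a contradiction. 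The patch is to invoke Proposition~\ref{Pp:domprop}: all $s_{B'}$ are Schur-positive with $c_{B'}^{\phi(B')}=1$, so $(1-c'_A)s_A=\sum_{B'\neq A}c'_{B'}s_{B'}$ forces $c'_A\leq1$ (compare the coefficient of $s_{\phi(A)}$), hence $c'_A=1$ and all remaining $c'_{B'}=0$, which degenerates the original decomposition to $s_B$ being a scalar multiple of $s_A$ for each $B$ with $c_B>0$; ruling this out (or absorbing it into the definition of extremality) closes the argument. You should add a sentence to this effect.
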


To prove Conjecture~\ref{Cj:firstconj}, we seek therefore
a set of separating functions, one for each $A$.
We now use Proposition~\ref{Pp:domprop} to reduce the amount
of work we must do in finding separating functions.  In effect,
Proposition~\ref{Pp:domprop} states that we need only work above
$\phi(A)$ in dominance order. To formalize this idea, we introduce this definition.
Let $\lambda=\phi(A)$, $A\in\wssp_N$.  
We say the symmetric function $f$ \textit{separates $A$ from above} if
\begin{enumerate}
\item
$f$ is Schur integral;
\item
$\innp{f}{s_A}>0$;
\item
$\innp{f}{s_B}\leq 0$ for all $B$ such that
$\phi(B)\dom\lambda$, $B\neq A$.
\end{enumerate}

For example, for $N=6$ and $A=\{(2,1),(2,1)\}$, we have $\lambda=2^21^2$.
If we take $f = s_{2^21^2}+s_{2^3}+s_{31^3}-s_{321}$,
then $f$ separates $A$ from above.

\begin{Le}\label{Le:separate}
If $f$ separates $A$ from above,
then there is a symmetric function $g$ which
separates $A$.
\end{Le}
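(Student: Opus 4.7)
The plan is to take $g = f + \sum_\nu d_\nu s_\nu$, where $\nu$ ranges over the partitions of $N$ that do not dominate $\phi(A)$, and the coefficients $d_\nu$ are real numbers to be chosen inductively. The reason this is the right ansatz is that the correction subspace $\mathrm{span}\{s_\nu : \nu \ndom \phi(A)\}$ is \emph{invisible} to every inner product already controlled by the hypothesis. Indeed, by Proposition~\ref{Pp:domprop}, the Schur expansion of $s_A$ is supported on $\mu \dom \phi(A)$, and the Schur expansion of each $s_B$ with $\phi(B)\dom\phi(A)$ is supported on $\mu \dom \phi(B) \dom \phi(A)$. Hence $\innp{s_\nu}{s_A}=0$ and $\innp{s_\nu}{s_B}=0$ whenever $\nu \ndom \phi(A)$, so
\[
\innp{g}{s_A}=\innp{f}{s_A}>0, \qquad \innp{g}{s_B}=\innp{f}{s_B}\le 0 \text{ for } B\ne A,\ \phi(B)\dom\phi(A),
\]
independent of the choice of the $d_\nu$.

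The remaining inequalities $\innp{g}{s_B}\le 0$ for $B\in\wssp_N$ with $\phi(B)\ndom\phi(A)$ are enforced by choosing the $d_\nu$ one at a time. List the partitions $\nu$ of $N$ with $\nu\ndom\phi(A)$ as $\nu_1,\dots,\nu_r$ in any linear extension of reverse dominance, i.e., with $\nu_i\sdom\nu_j$ implying $i<j$. Having fixed $d_{\nu_1},\dots,d_{\nu_{i-1}}$, set $g_i = f+\sum_{j<i}d_{\nu_j}s_{\nu_j}$ and define
\[
d_{\nu_i} = -\max\bigl\{\innp{g_i}{s_B}\,\big|\, B\in\wssp_N,\ \phi(B)=\nu_i\bigr\}
\]
(taking $d_{\nu_i}=0$ if the set is empty). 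Since $c_B^{\nu_i}=1$ for every $B$ with $\phi(B)=\nu_i$ by Proposition~\ref{Pp:domprop}, this choice makes $\innp{g_{i+1}}{s_B}\le 0$ for every such $B$.

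The only step requiring a small verification is that each stage leaves the previous inequalities intact. For $B'$ with $\phi(B')=\nu_j$ and $j<i$, $\innp{s_{\nu_i}}{s_{B'}}=c_{B'}^{\nu_i}$ is nonzero only when $\nu_i\dom\nu_j$; but the ordering excludes $\nu_i\sdom\nu_j$, and $\nu_i\ne\nu_j$, so the added term $d_{\nu_i}s_{\nu_i}$ contributes nothing to $\innp{\cdot}{s_{B'}}$. Taking $g=g_{r+1}$ therefore produces a symmetric function that separates $A$. I foresee no real obstacle: the mild subtlety is simply recognizing that the triangularity in Proposition~\ref{Pp:domprop} carves out an invisible correction subspace whose dimension is exactly matched to the constraints we still need to satisfy — one scalar per partition $\nu\ndom\phi(A)$.
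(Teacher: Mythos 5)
Your proposal is correct and is essentially the paper's own argument: the paper grows a dual order ideal downward from $\phi(A)$, at each step subtracting $m\,s_\mu$ with $m=\max_{B\in\wssp_\mu}\innp{u}{s_B}$, which is exactly your linear extension of reverse dominance with the coefficients $d_{\nu_i}$ chosen one at a time, and both rely on the same triangularity from Proposition~\ref{Pp:domprop} to see that each correction leaves all previously enforced inequalities (and $\innp{\cdot}{s_A}$) untouched.
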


\begin{proof}
Let $I$ be a dual order ideal in the dominance poset (see~\cite{Stan1}
for definitions) with $\lambda = \phi(A)\in I$.  Let $u$
be a symmetric function with the following properties:
\begin{align}\label{eq:sep}
\innp{u}{s_A}&>0\,;\\
\innp{u}{s_B}&\leq 0\quad\text{for all $B\neq A$ such that
$\phi(B)\in I$}\,.\notag
\end{align}

We show how to grow $I$ and $u$.  Let $\mu$ be a partition which
lies ``just below'' $I$, that is, $\mu\notin I$ and
$J=I\cup\{\mu\}$ is a dual order ideal.
Let 
$$
m=\max_{B\in \wssp_{\mu}}\{\innp{u}{s_B}\}\,.  
$$
If $m\leq 0$, then $u$ satisfies~\eqref{eq:sep} for $J$. 
Otherwise, let $u^*=u-ms_{\mu}$.
We show that $u^*$ satisfies~\eqref{eq:sep} for $J$.

For any $B$ such that $\phi(B)\in I$, we have
$\mu\ndom\phi(B)$, since $\mu\notin I$.  Therefore,
by Proposition~\ref{Pp:domprop},
$\innp{s_{\mu}}{s_B}=0$, and we have
$$
\innp{u^*}{s_B}=\innp{u}{s_B}
\begin{cases}
\leq0&\text{if $B\neq A$}\\
>0&\text{if $B=A$}
\end{cases}\,.
$$

For $B\in \wssp_{\mu}$, by Proposition~\ref{Pp:domprop},
$\innp{s_{\mu}}{s_B}=1$, so
we have
$$
\innp{u^*}{s_B}=\innp{u}{s_B}-m\leq0\,.
$$

The proof now proceeds by iterating this construction.
If $f$ separates $A$ from above, then
$f$ satisfies~\eqref{eq:sep} for the dual order
ideal generated by $\lambda$.  By iterating the contruction above,
we eventually arrive at a function $g$ which satisfies~\eqref{eq:sep}
for $I$ equal to the entire dominance poset.  This is the
same as saying $g$ separates $A$.
\end{proof}

An important special case is the following corollary.

\begin{Co}\label{Co:SSP1case}
If $|\wssp_{\lambda}|=1$, that is, $\wssp_{\lambda}=\{A\}$, then
$A$ is extreme in $\mathcal C_N^2$.
\end{Co}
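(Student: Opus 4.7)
The plan is to produce a separating-from-above function directly, then invoke Lemma~\ref{Le:separate} and Theorem~\ref{Th:Farkas}. The candidate is the simplest possible: take $f=s_{\lambda}$ where $\lambda=\phi(A)$. The definition of ``separates from above'' requires checking two conditions, and both follow almost immediately from Proposition~\ref{Pp:domprop}.

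First, by Proposition~\ref{Pp:domprop} we have $s_A=s_{\phi(A)}+\sum_{\mu\sdom\phi(A)}c_A^{\mu}s_{\mu}=s_{\lambda}+(\text{higher terms})$, so $\innp{s_{\lambda}}{s_A}=1>0$. Second, consider any $B\in\wssp_N$ with $B\neq A$ and $\phi(B)\dom\lambda$. If $\phi(B)=\lambda$, then $B\in\wssp_{\lambda}=\{A\}$, contradicting $B\neq A$. So we must have $\phi(B)\sdom\lambda$; in particular $\lambda\nsdom\phi(B)$ and $\lambda\neq\phi(B)$, so $\lambda$ does not appear in the support of $s_B$ guaranteed by Proposition~\ref{Pp:domprop}. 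Hence $\innp{s_{\lambda}}{s_B}=0\leq 0$. Thus $f=s_{\lambda}$ separates $A$ from above.

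Finally, Lemma~\ref{Le:separate} converts $f$ into a symmetric function $g$ which separates $A$ (from all other $B\in\wssp_N$, not just those above $\lambda$ in dominance order), and Theorem~\ref{Th:Farkas} then yields that $A$ is extreme in $\mathcal{C}_N^2$.

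There is essentially no obstacle in this corollary: the uniqueness hypothesis $|\wssp_{\lambda}|=1$ is precisely what makes the trivial choice $f=s_{\lambda}$ work, because it rules out the one case (another nested set $B$ with $\phi(B)=\lambda$) in which Proposition~\ref{Pp:domprop} would not automatically force $\innp{s_{\lambda}}{s_B}=0$. All the real work has already been done in setting up Lemma~\ref{Le:separate}.
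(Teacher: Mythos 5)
Your proof is correct and is exactly the paper's argument: the paper's entire proof is the single sentence that $s_{\lambda}$ separates $A$ from above, which you have simply verified in detail via Proposition~\ref{Pp:domprop} before invoking Lemma~\ref{Le:separate} and Theorem~\ref{Th:Farkas}.
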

\begin{proof}
The function $s_{\lambda}$ separates $A$ from above.
\end{proof}

We can limit our search for separating functions even further by restricting
to an interval in the dominance poset.  Suppose $\phi(A)=\lambda$
and $\rho\dom\lambda$.  We will say the symmetric function $f$
\textit{separates} $A$ on $[\lambda,\rho]$ if
\begin{enumerate}
\item
$f$ is Schur integral;
\item
$\innp{f}{s_{\mu}}=0$ whenever $\mu\notin[\lambda,\rho]$.
That is, the \textit{support of $f$} lies on $[\lambda,\rho]$;
\item
$\innp{f}{s_A}>0$;
\item
$\innp{f}{s_B}\leq 0$ for all $B$ such that
$\phi(B)\in[\lambda,\rho]$, $B\neq A$.
\end{enumerate}

\begin{Le}\label{Le:intervalseparate}
If $f$ separates $A$ on $[\lambda,\rho]$, then
$f$ separates $A$ from above.
\end{Le}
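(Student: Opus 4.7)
The plan is to unpack both definitions and verify the two ``from above'' conditions directly. The positivity $\innp{f}{s_A}>0$ is granted by the first hypothesis, so the only work is to show that $\innp{f}{s_B}\leq 0$ for every $B\in\wssp_N$ with $B\neq A$ and $\phi(B)\dom\lambda$. I would split this verification into the case $\phi(B)\in[\lambda,\rho]$ and its complement.

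When $\phi(B)\in[\lambda,\rho]$, the required inequality is immediate from the third condition in the definition of separation on $[\lambda,\rho]$. When $\phi(B)\dom\lambda$ but $\phi(B)\notin[\lambda,\rho]$, I would show more strongly that $\innp{f}{s_B}=0$. By Proposition~\ref{Pp:domprop}, $s_B=\sum_{\mu\dom\phi(B)}c_B^\mu s_\mu$ with $c_B^\mu\geq 0$; since the support of $f$ lies in $[\lambda,\rho]$, only indices $\mu$ satisfying $\rho\dom\mu$ can contribute to $\innp{f}{s_B}$. But $\rho\dom\mu$ together with $\mu\dom\phi(B)$ would force $\rho\dom\phi(B)$ by transitivity of dominance, and combined with $\phi(B)\dom\lambda$ this would place $\phi(B)$ in $[\lambda,\rho]$, contradicting the case assumption. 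Hence the relevant sum is empty and $\innp{f}{s_B}=0$.

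No step here is a real obstacle; the argument amounts to a single transitivity observation applied to Proposition~\ref{Pp:domprop}. The one thing to keep track of is the orientation of the dominance order: because $\rho\dom\lambda$, the interval $[\lambda,\rho]$ consists of partitions $\mu$ with $\rho\dom\mu\dom\lambda$, so ``leaving the interval upward'' means failing $\rho\dom\mu$. This orientation governs the case split but introduces no genuine difficulty.
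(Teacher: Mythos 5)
Your argument is correct and is essentially the paper's own proof: the paper likewise reduces to showing $\innp{f}{s_B}=0$ when $\phi(B)\dom\lambda$ but $\rho\ndom\phi(B)$, using Proposition~\ref{Pp:domprop} and the fact that the support of $f$ lies in $[\lambda,\rho]$. Your write-up just makes the transitivity step explicit.
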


\begin{proof}
We show that for $B$ such that $\phi(B)\sdom\lambda$ but
$\rho\ndom\phi(B)$, we have $\innp{f}{s_B}=0$.
The support of $s_B$ is $\dom\phi(B)$ (Proposition~\ref{Pp:domprop}).
But then the support of $s_B$ cannot be below $\rho$, so
the support of $s_B$ does not intersect the interval
$[\lambda,\rho]$.
\end{proof}

Again suppose $\phi(A)=\lambda$, where $A\in\wssp_N$.
Two intervals above $\lambda$ in dominance will be of particular
interest to us.  First, if 
$\lambda=(\lambda_1\geq\lambda_2\geq\dots\geq\lambda_m>0)$, define
$$
\lambda^+=(\lambda_1+1,\lambda_2,\lambda_3,\dots,\lambda_{m-1},\lambda_m-1)\,.
$$
For example, if $\lambda=(4,3,3,2,2,2,1)$, then $\lambda^+=(5,3,3,2,2,2)$.
In the next section, we will find a symmetric function $f$ which separates $A$
on $[\lambda,\lambda^+]$ when $\lambda$ has distinct parts.  However,
this interval is not sufficient when $\lambda$ has repeated parts.  For example,
if $\lambda=2^31^3$, then no such $f$ separates
$A=\{(2,1),(2,1),(2,1)\}$ on this interval.

Now define
$$
\lambda^{++}=
\begin{cases}
(\lambda_1+1,\lambda_2+1,\dots,\lambda_k+1,
\lambda_{k+1}-1,\dots,\lambda_m-1)&\text{if $m=2k$}\\
(\lambda_1+1,\lambda_2+1,\dots,\lambda_k+1,\lambda_{k+1},
\lambda_{k+2}-1,\dots,\lambda_m-1)&\text{if $m=2k+1$}\,.
\end{cases}
$$
In the previous example,
$\lambda^{++}=(5,4,4,2,1,1)$.

\begin{Cj}\label{Cj:plusplusconj}
For every $A\in\wssp_N$ with $\phi(A)=\lambda$ there is a symmetric
function $f$ such that $f$ separates $A$ on $[\lambda,\lambda^{++}]$.
\end{Cj}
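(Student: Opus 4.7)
My plan is to construct the separating function $f_A$ explicitly for each $A \in \wssp_\lambda$ and then appeal to Lemma~\ref{Le:intervalseparate}. Using the plane partition bijection of Proposition~\ref{Pp:planepartition}, elements of $\wssp_\mu$ for $\mu \in [\lambda, \lambda^{++}]$ and their Littlewood-Richardson coefficients $c_B^\mu$ can be indexed by combinatorial data (two-row plane partitions with prescribed multisets of parts, together with the LR-tableaux interpretation). Writing $f_A = \sum_{\mu \in [\lambda,\lambda^{++}]} e_\mu s_\mu$, the problem reduces to choosing coefficients $e_\mu$ so that $\sum_\mu e_\mu c_A^\mu > 0$ while $\sum_\mu e_\mu c_B^\mu \leq 0$ for every other $B$ with $\phi(B) \in [\lambda, \lambda^{++}]$.

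A natural first candidate comes from Jacobi-Trudi. For any pairing of $\lambda$'s parts realized by $A$ as two-part partitions $\{(\alpha_i,\beta_i)\}_{i=1}^k$,
\[
s_A = \prod_{i=1}^k s_{(\alpha_i,\beta_i)} = \sum_{S \subseteq [k]} (-1)^{|S|} h_{\tau_A(S)},
\]
where $\tau_A(S)$ has parts $\{\alpha_i,\beta_i : i\notin S\} \cup \{\alpha_i+1,\beta_i-1 : i\in S\}$ and lies in $[\lambda,\lambda^{++}]$. The first concrete test is whether the Schur analog $\sum_S (-1)^{|S|} s_{\tau_A(S)}$ works as $f_A$; a quick check with $\lambda = (2,2,1,1)$ shows the naive substitution fails when $A$ has repeated paired parts, so $f_A$ must depend more intricately on the combinatorics of $\psi(A)$. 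I would then attempt a greedy construction modeled on Lemma~\ref{Le:separate} but restricted to the interval: initialize $f = s_\lambda$ and, traversing $\mu \in [\lambda,\lambda^{++}]$ in dominance order, add correction terms $e_\mu s_\mu$ designed to enforce $\langle f, s_B\rangle \leq 0$ for each $B$ with $\phi(B) = \mu$ and $B \neq A$, while preserving $\langle f, s_A\rangle > 0$.

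Unlike in Lemma~\ref{Le:separate}, we are confined to the interval, so a correction at higher $\mu$ can retroactively perturb inner products at lower $\mu$; this coupling is the core difficulty. The hardest step will be proving that the greedy procedure always succeeds, which by Farkas's lemma (Theorem~\ref{Th:Farkas}) is equivalent to showing that the vector $(c_A^\mu)_{\mu \in [\lambda,\lambda^{++}]}$ is not a non-negative combination of the corresponding vectors for the other $B$'s in the interval. I would approach this structurally: assume a putative relation $b_A = \sum_{B \neq A} c_B b_B$ with $c_B \geq 0$, compare coordinates at $\mu = \lambda$ (which by Proposition~\ref{Pp:domprop} only involves $B \in \wssp_\lambda$ and forces a nontrivial relation among the $c_{A'}$), and propagate through increasing dominance to derive a contradiction, ideally witnessed by a sign-reversing involution on LR-tableaux indexed by box-moves on the plane partition $\psi(A)$.
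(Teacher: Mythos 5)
The statement you are trying to prove is Conjecture~\ref{Cj:plusplusconj}, which the paper does not prove: it is left open, verified computationally only for $N\leq 20$ (via the stronger Conjecture~\ref{Cj:strongconj}), and established only in the special case where $\lambda$ has distinct parts --- and even there on the smaller interval $[\lambda,\lambda^+]$ (Theorem~\ref{Th:distinct}). Your submission is accordingly not a proof but a research plan, and the plan defers exactly the point at issue. The sentence ``the hardest step will be proving that the greedy procedure always succeeds'' concedes the entire content of the conjecture: by Farkas' Lemma the existence of a separating $f$ supported on $[\lambda,\lambda^{++}]$ is \emph{equivalent} to the non-membership statement you restate at the end, so reformulating the problem that way is circular unless you actually produce the sign-reversing involution or otherwise derive the contradiction. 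You construct neither. You also correctly observe that your first candidate (the signed sum of $s_{\tau_A(S)}$ suggested by Jacobi--Trudi) already fails for $\lambda=(2,2,1,1)$, which is consistent with the paper's remark that repeated parts are precisely where the difficulty lies; but noting that a candidate fails is not a substitute for exhibiting one that works.

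For comparison, the paper's partial result proceeds quite differently from your proposed greedy traversal of $[\lambda,\lambda^{++}]$ in dominance order: it builds a chain of subsets $\wssp_{\lambda}=X_1\supseteq\dots\supseteq X_m=\{A\}$ indexed by the nested pairs of $A$ taken from the inside out, and at each step uses an explicit partially separating function $g=(c_A^{\lambda[\rho]}+1)s_{\lambda}-s_{\lambda[\rho]}$ whose correctness rests on the precise Littlewood--Richardson identity $c_A^{\lambda[\rho]}+1=c_B^{\lambda[\rho]}$ of Lemma~\ref{Le:LRlemma}. That identity depends on the parts of $\lambda$ being distinct, and the paper explicitly states that for repeated parts the analogous chain and partially separating vectors ``seem much more elusive.'' If you want to make progress, the concrete missing ingredient is an analogue of Lemma~\ref{Le:LRlemma} --- an exact control of how $c_A^{\mu}$ and $c_B^{\mu}$ differ for $\mu$ strictly between $\lambda^+$ and $\lambda^{++}$ when $A$ and $B$ differ in a repeated-part pairing --- not a general scheme for choosing the coefficients $e_{\mu}$.
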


We have verified Conjecture~\ref{Cj:plusplusconj} for $N\leq20$.

\section{Distinct partitions}\label{S:pd}

In this section we show that if $A$ is nested and if $\phi(A)$ has distinct parts, then
$s_A$ is extreme.

\begin{Th}\label{Th:distinct}
If $\lambda$ has distinct parts and $A\in\wssp_{\lambda}$, then
there is a symmetric function $f$ which separates $A$ on
the interval $[\lambda,\lambda^+]$.
\end{Th}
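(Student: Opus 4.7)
The plan is to construct an explicit separating function $f$ supported on $[\lambda,\lambda^+]$, built from the combinatorial structure attached to $A$. Since $\lambda=(\lambda_1>\lambda_2>\dots>\lambda_m>0)$ has distinct parts, each $\mu\in[\lambda,\lambda^+]$ is determined by a $0/1$-sequence $(\delta_1,\dots,\delta_{m-1})$ via $\mu_i=\lambda_i+\delta_i-\delta_{i-1}$ (with $\delta_0=\delta_m=0$), and distinctness of the parts makes each such $\mu$ automatically a partition. On the combinatorial side, the nestedness condition identifies each $A\in\wssp_\lambda$ with a non-crossing partial matching $M_A$ of the row indices $\{1,\dots,m\}$: two-part partitions in $A$ become arcs $(i,j)$ with $i<j$, and when $m$ is odd the one-part partition gives the unique free point, which nestedness forces to lie outside every arc. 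For each arc $(i,j)\in M_A$, the within-pair shift $\mu^{(i,j)}=(\lambda_1,\dots,\lambda_i+1,\dots,\lambda_j-1,\dots,\lambda_m)$ is a partition belonging to $[\lambda,\lambda^+]$.

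I would try the ansatz
\[
f=\alpha\,s_\lambda-\sum_{(i,j)\in M_A}\beta_{(i,j)}\,s_{\mu^{(i,j)}},
\]
with positive reals $\alpha$ and $\beta_{(i,j)}$ to be chosen. By Proposition~\ref{Pp:domprop}, $c_B^\lambda=0$ whenever $\phi(B)\sdom\lambda$, so $\innp{f}{s_B}=-\sum\beta_{(i,j)}c_B^{\mu^{(i,j)}}\le 0$ automatically for every $B$ at a level strictly above $\lambda$. The separation inequalities at those $B$'s come for free, and since $c_B^\lambda=1$ for every $B\in\wssp_\lambda$, the remaining conditions reduce to the linear system
\[
\sum_{(i,j)\in M_A}\beta_{(i,j)}\,c_A^{\mu^{(i,j)}}\;<\;\alpha\;\le\;\sum_{(i,j)\in M_A}\beta_{(i,j)}\,c_B^{\mu^{(i,j)}},\qquad B\in\wssp_\lambda\setminus\{A\}.
\]

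The combinatorial fact driving feasibility is that $c_A^{\mu^{(i,j)}}=0$ whenever $(i,j)$ is an innermost arc of $M_A$. This follows from the Jacobi--Trudi expansion $s_A=\sum_{S\subseteq M_A}(-1)^{|S|}h_{\lambda^S}$ rewritten as $c_A^{\mu^{(i,j)}}=\sum_S(-1)^{|S|}K_{\mu^{(i,j)},\overline{\lambda^S}}$: the non-crossing property forces every $S$ other than $\emptyset$ or $\{(i,j)\}$ to make $\overline{\lambda^S}$ strictly dominate $\mu^{(i,j)}$, killing the associated Kostka numbers and leaving only $K_{\mu^{(i,j)},\lambda}=1$ and $-K_{\mu^{(i,j)},\mu^{(i,j)}}=-1$, which cancel. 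Coupled with the witness statement that for every $B\in\wssp_\lambda\setminus\{A\}$ there is some innermost arc $(i,j)$ of $M_A$ with $c_B^{\mu^{(i,j)}}\ge 1$, the system is easily made feasible by weighting innermost arcs with a large common $\beta=K$, outer arcs by $\beta=1$, and choosing $\alpha$ between the bounded outer sum $\sum c_A^{\mu^{(i,j)}}$ and $K$.

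The main technical difficulty lies in establishing the witness statement and in dealing with cases where $M_A$ and $M_B$ happen to share every innermost arc of $M_A$, differing only in outer nesting structure; in such cases the simple within-pair-move ansatz fails and one must enrich $f$ with additional Schur terms $s_{\mu^T}$ for subsets $T\subseteq M_A$ of pairwise-disjoint arcs (so that $\mu^T=\lambda+\sum_{(i,j)\in T}(e_i-e_j)$ remains in $[\lambda,\lambda^+]$), chosen to detect outer discrepancies. Establishing feasibility of the enriched system, by induction on the nesting depth of $M_A$ and careful comparison of two non-crossing matchings through iterated within-pair shifts, is the heart of the argument.
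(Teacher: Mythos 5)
Your proposal correctly identifies the right building blocks --- the non-crossing matching picture of $A$, the candidate Schur terms $s_{\lambda[\rho]}$ for $\rho\in A$ (your $s_{\mu^{(i,j)}}$), and the vanishing $c_A^{\lambda[\rho]}=0$ for innermost arcs --- but it does not constitute a proof. You yourself concede the two critical points: the ``witness statement'' fails whenever $B\neq A$ contains every innermost arc of $M_A$ (e.g.\ $A$ with arcs $(1,6),(2,5),(3,4)$ versus $B$ with arcs $(1,2),(3,4),(5,6)$, where the sole innermost arc $(3,4)$ is shared and $c_B^{\mu^{(3,4)}}=0$), and the ``enriched system'' needed to handle outer arcs is left with its feasibility unestablished. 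That feasibility is precisely the content of the theorem, so the heart of the argument is missing.

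The paper closes exactly this gap with an iterative, inside-out construction rather than a single simultaneous linear system. It orders the arcs of $A$ so that each arc is processed only after everything nested inside it, and maintains a chain $\wssp_\lambda=X_1\supseteq\cdots\supseteq X_m=\{A\}$ where $X_{i+1}$ consists of those $B$ agreeing with $A$ on the first $i+1$ arcs. At each step it uses a partial separator $g=(c_A^{\lambda[\rho]}+1)s_\lambda-s_{\lambda[\rho]}$ and a combination lemma (Lemma~\ref{Le:seplemma}) that lets the coefficient $b$ be chosen afresh at each stage, so no global system of inequalities ever has to be solved. The key point you are missing is that the relevant LR identity (Lemma~\ref{Le:LRlemma}) is not restricted to innermost arcs: for \emph{any} arc $\rho=(\lambda_i,\lambda_j)$, as long as $A$ and $B$ already agree within $\rho$ and $\rho\in A$, $\rho\notin B$, one has $c_A^{\lambda[\rho]}+1=c_B^{\lambda[\rho]}$, because all rows of a tableau of shape $\lambda[\rho]$ and content $\lambda$ are forced except row $i$, and the unique non-lattice tableau $T_0$ is counted by $B$ but not by $A$. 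Since the inside-out ordering guarantees the ``agree within'' hypothesis at every stage, this single identity drives the whole induction; your proposed enrichment by terms $s_{\mu^T}$ for sets of disjoint arcs is not needed. To repair your argument you would either have to carry out that enrichment and prove its feasibility (which you have not done), or adopt the paper's sequential scheme.
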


Our strategy for proving Theorem~\ref{Th:distinct} is to find a chain of
subsets in $\wssp_{\lambda}$, starting with $\wssp_{\lambda}$ itself and ending
with $\{A\}$, such that there is a vector which separates the $(i+1)$ subset
in the chain from the $i$ subset. Putting these separating vectors together
produces a separating vector for $A$.

To make this strategy precise, we need some technical definitions and
lemmas.  Suppose $Y\subseteq\wssp_{\lambda}$.  We say symmetric function
$f$ \textit{separates
$Y$ on} $[\lambda,\mu]$ if
\begin{enumerate}
\item
$f$ is Schur-integral.
\item
$\innp{f}{s_B}=0$ for all $B\in\wssp_{\nu}$, $\nu\notin[\lambda,\mu]$.
\item
$\innp{f}{s_B}\leq0$ for all $B\in\wssp_{\nu}$, $\nu\in[\lambda,\mu]$,
$\nu\neq\lambda$.
\item
$\innp{f}{s_B}\leq0$ for all $B\in\wssp_{\lambda}-Y$.
\item
$\innp{f}{s_A}=k>0$ for all $A\in Y$, where integer $k$ does not depend on $A$.
\end{enumerate}

Now suppose $X\subseteq Y\subseteq\wssp_{\lambda}$.  We say 
symmetric function $g$ 
\textit{partially separates} $(X, Y)$ on $[\lambda,\mu]$ if
\begin{enumerate}
\item
$g$ is Schur-integral.
\item
$\innp{g}{s_B}=0$ for all $B\in\wssp_{\nu}$, $\nu\notin[\lambda,\mu]$.
\item
$\innp{g}{s_B}\leq0$ for all $B\in\wssp_{\nu}$, $\nu\in[\lambda,\mu]$,
$\nu\neq\lambda$.
\item
$\innp{g}{s_B}\leq0$ for all $B\in Y-X$.
\item
$\innp{g}{s_A}=l>0$ for all $A\in X$, where integer $l$ does not depend on $A$.
\end{enumerate}

Note that the sign of $\innp{g}{s_B}$ is not specified for $B\in\wssp_{\lambda}-Y$.

\begin{Le}\label{Le:seplemma}
Suppose $f$ separates $Y$ on $[\lambda,\mu]$ and $g$ partially separates
$(X, Y)$ on $[\lambda,\mu]$.  Then there exists an $h$ which separates
$X$ on $[\lambda,\mu]$.
\end{Le}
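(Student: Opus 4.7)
The plan is to construct $h$ as a non-negative integer linear combination $h=\alpha f+\beta g$ with $\alpha,\beta$ to be chosen. Schur-integrality and condition (2) (non-positivity on $\wssp_\nu$ for $\nu\in(\lambda,\mu]$) follow immediately from the corresponding properties of $f$ and $g$. Condition (4) (positivity and constancy on $X$) reduces to $\innp{h}{s_A}=\alpha k+\beta l$, a positive constant independent of $A\in X$.

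The real work lies in verifying condition (3), $\innp{h}{s_B}\leq 0$ for $B\in \wssp_\lambda\setminus X=(Y-X)\sqcup(\wssp_\lambda - Y)$. On $B\in Y-X$ we have $\innp{f}{s_B}=k$ and $\innp{g}{s_B}\leq 0$, so the constraint imposes an upper bound $\alpha/\beta\leq|\innp{g}{s_B}|/k$. On $B\in\wssp_\lambda - Y$ with $\innp{g}{s_B}\leq 0$ the constraint is automatic from $\innp{f}{s_B}\leq 0$; on such $B$ with $\innp{g}{s_B}>0$ we need the lower bound $\alpha/\beta\geq\innp{g}{s_B}/|\innp{f}{s_B}|$, which in particular requires $\innp{f}{s_B}<0$ strictly. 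Thus the proof reduces to locating a positive rational ratio $\alpha/\beta$ in the intersection of these ranges and then clearing denominators.

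The main obstacle is handling degenerate equalities (where $\innp{g}{s_B}=0$ on $Y-X$, or $\innp{f}{s_B}=0$ alongside $\innp{g}{s_B}>0$ on $\wssp_\lambda - Y$) and then checking that the resulting interval of admissible ratios is nonempty. I would handle degeneracy by first preprocessing: replace $g$ by $2g-s_\lambda$ and, if necessary, $f$ by $2f-s_\lambda$. By Proposition~\ref{Pp:domprop}, $\innp{s_\lambda}{s_B}$ vanishes on every $\wssp_\nu$ with $\nu\sdom\lambda$ and equals $1$ on $\wssp_\lambda$. These substitutions therefore preserve separation and partial separation, with new positive integer constants $2k-1$ and $2l-1$, while forcing $\innp{g}{s_B}\leq -1$ strictly on $Y-X$ and $\innp{f}{s_B}\leq -1$ strictly on $\wssp_\lambda - Y$. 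With strict inequalities in place the upper and lower bounds on $\alpha/\beta$ are bounded away from the degenerate values, so one can pick a rational between them and scale to integers. The remaining (and genuinely delicate) step is to argue that the maximum lower bound does not exceed the minimum upper bound; I expect this to follow from a further structural comparison of $f$ and $g$ on $\wssp_\lambda - Y$ using the Littlewood-Richardson expansion, and this compatibility check is the step I anticipate requiring the most care.
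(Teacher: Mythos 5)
There is a genuine gap, and it is exactly the step you flag at the end: the feasibility of the interval of admissible ratios $\alpha/\beta$. With $h=\alpha f+\beta g$ you need simultaneously $\alpha/\beta\leq\min_{B\in Y-X}|\innp{g}{s_B}|/k$ and $\alpha/\beta\geq\max\innp{g}{s_B}/|\innp{f}{s_B}|$ over those $B\in\wssp_{\lambda}-Y$ with $\innp{g}{s_B}>0$. Nothing in the hypotheses of the lemma bounds $\innp{g}{s_B}$ on $\wssp_{\lambda}-Y$ (the definition of partial separation says nothing at all about $g$ there), so the lower bound can exceed the upper bound; the hoped-for ``structural comparison'' does not exist, since the lemma is stated and must be proved purely from the listed inequalities. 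A two-term non-negative combination of $f$ and $g$ is therefore not sufficient in general, even after your preprocessing, which only removes the degenerate equalities and does not shrink the lower bound.

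The missing idea is to use $s_{\lambda}$ itself as a third term in the combination, not merely as a perturbation to break ties. By Proposition~\ref{Pp:domprop}, $\innp{s_{\lambda}}{s_B}$ equals $1$ for $B\in\wssp_{\lambda}$ and $0$ for $B\in\wssp_{\nu}$ with $\nu\sdom\lambda$. Hence $f-ks_{\lambda}$ vanishes exactly on $Y$, is $\leq -k$ on $\wssp_{\lambda}-Y$, and is $\leq 0$ on the rest of the interval. Setting $h=g+b(f-ks_{\lambda})$ with $b$ a non-negative integer at least $m/k$, where $m=\max_{B\in\wssp_{\lambda}-Y}\innp{g}{s_B}$, one gets: $\innp{h}{s_B}=\innp{g}{s_B}\leq 0$ on $Y-X$, $\innp{h}{s_A}=l$ on $X$, $\innp{h}{s_B}\leq m-bk\leq 0$ on $\wssp_{\lambda}-Y$, and $\innp{h}{s_B}\leq 0$ off $\wssp_{\lambda}$. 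The point is that the term $b(f-ks_{\lambda})$ can be made arbitrarily negative on $\wssp_{\lambda}-Y$ without disturbing the values of $g$ anywhere else, which decouples the two constraint families that your ratio $\alpha/\beta$ was trying to satisfy simultaneously.
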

\begin{proof}
Let
$$
m=\max_{B\in\wssp_{\lambda}-Y}\innp{g}{s_B}\,.
$$
Pick a non-negative integer $b\geq m/k$.  Let
$$
h=g+bf-bks_{\lambda}\,.
$$
We now verify that $h$ has the required properties.  Clearly, $h$ is
Schur-integral and its support lies on $[\lambda,\mu]$.

Now suppose $B\in\wssp_{\nu}$, $\nu\in[\lambda,\mu]$, $\nu\neq\lambda$.
Then $\innp{g}{s_B}\leq 0$, $\innp{f}{s_B}\leq 0$, and $\innp{s_{\lambda}}{s_B}=0$
(by Proposition~\ref{Pp:domprop}).
Thus
$$
\innp{h}{s_B}\leq0\,,
$$
since $b\geq 0$.

Next, suppose $B\in\wssp_{\lambda}-Y$.
Then $\innp{g}{s_B}\leq m$, $\innp{f}{s_B}\leq 0$, and $\innp{s_{\lambda}}{s_B}=1$
(again by Proposition~\ref{Pp:domprop}).
Then
$$
\innp{h}{s_B}\leq m-bk\leq 0\,,
$$
since $b\geq 0$ and $b\geq m/k$.

Next, suppose $B\in Y-X$.  Then $\innp{g}{s_B}\leq 0$, $\innp{f}{s_B}=k$,
and $\innp{s_{\lambda}}{s_B}=1$.  Thus
$$
\innp{h}{s_B}\leq bk-bk=0\,.
$$

Finally, suppose $A\in X$.  Then
$\innp{g}{s_A}=l$, $\innp{f}{s_A}=k$, and $\innp{s_{\lambda}}{s_A}=1$, so
$$
\innp{h}{s_A}=l+bk-bk=l>0\,,
$$
and $l$ is independent of the choice of $A$.
\end{proof}

\begin{Co}\label{Co:chain}
If $\lambda=\phi(A)$ and 
$$
\wssp_{\lambda}=X_0\supseteq X_1\supseteq X_2\supseteq\dots\supseteq X_t=\{A\}
$$
and for each $i=0,1,\dots,t-1$ there is an $f_i$ which partially separates $(X_{i+1},X_{i})$
on $[\lambda,\mu]$, then there is a symmetric function $g$ which separates $A$
on $[\lambda,\mu]$.
\end{Co}

\begin{proof}
Clearly $s_{\lambda}$ separates all of $\wssp_{\lambda}$ on any interval. Iteratively
applying Lemma~\ref{Le:seplemma} in this chain of subsets yields $g$ which
separates $\{A\}$ on $[\lambda,\rho]$.  But that is the same as $g$ separates
$A$ on $[\lambda,\rho]$.
\end{proof}

Note that up to this point, we have not used the fact that $\lambda$ is distinct.
From now on, we assume $\lambda$ is distinct.

Suppose $A$, $B\in\wssp_{\lambda}$ and $\rho=(\lambda_i,\lambda_j)$,
but $\rho$ not necessarily
in either $A$ or $B$.
We say $A$ and $B$ \textit{agree
within} $\rho$ if
\begin{enumerate}
\item
if $(\lambda_u,\lambda_v)\in A$ (resp. $B$) and either $u$ or $v$ is
between $i$ and $j$, then
$i<u<v<j$;
\item
if $i<u<v<j$, then $(\lambda_u,\lambda_v)\in A$ if and only if
$(\lambda_u,\lambda_v)\in B$
\end{enumerate}

In addition, we say $A$ and $B$ \textit{agree on} $\rho$ if they agree within
$\rho$ and $\rho\in A, B$.

Note that these two definitions allow a part of size $1$ to be within $\rho$. 
That part must be in both $A$ and $B$.
Also define
$$
\lambda[\rho]=(\lambda_1,\lambda_2,\dots,\lambda_{i-1},
\lambda_i+1,\lambda_{i+1},
\dots,\lambda_{j-1},\lambda_j-1,\lambda_{j+1},\dots,\lambda_m)\,.
$$
Note that $\lambda[\rho]\in[\lambda,\lambda^+]$.

We illustrate these definitions with an example. Let
$$
\lambda=(17,16,15,13,12,11,9,8,7,5,4,2)\,,
$$
and
\begin{gather*}
A=(17,2),(16,7),(15,11),(13,12),(9,8),(5,4)\\
B=(17,5),(16,7),(15,11),(13,12),(9,8),(4,2)\\
C=(17,16),(15,11),(13,12),(9,8),(7,2),(5,4)
\end{gather*}
Let $\rho=(16,7)$. 
Then $A$, $B$ and $C$ all agree within $\rho$, and $A$ and $B$ agree on $\rho$.
Finally,
$$
\lambda[\rho]=(17,17,15,13,12,11,9,8,6,5,4,2)\,.
$$

The following lemma is crucial to our proof of Theorem~\ref{Th:distinct}.
\begin{Le}\label{Le:LRlemma}
Suppose $A$, $B\in\wssp_{\lambda}$, $\lambda$ distinct.
Suppose $\rho=(\lambda_i,\lambda_j)$, with
$\rho\in A$, $\rho\notin B$, and $A$ and $B$ agree within
$\rho$.
Then the Littlewood-Richardson coefficients satisfy the following identity:
$$
c_A^{\lambda[\rho]}+1 = c_B^{\lambda[\rho]}\,.
$$
Furthermore, if $j=i+1$, then
$c_A^{\lambda[\rho]}=0$
and
$c_B^{\lambda[\rho]}=1$.
\end{Le}

We defer the proof of Lemma~\ref{Le:LRlemma} for the moment and
show how it leads directly to a proof of Theorem~\ref{Th:distinct}.

\begin{proof}[Proof of Theorem~\ref{Th:distinct}.]
Suppose $A\in\wssp_{\lambda}$ where $\lambda=(\lambda_1,\dots,\lambda_n)$.
If $n=2m$ is even, then
all the partitions in $A$ are $2$-part partitions.  If $n=2m-1$
is odd, then exactly one partition in $A$ has one part.

Order all the partitions in $A$ with two parts from the ``inside out.''
That is, list the partitions in $A$ as $\rho^1,\rho^2,\dots$, where
all the partitions within $\rho^j$ appear among $\rho^1$, $\rho^2$,
$\dots$, $\rho^{j-1}$. If $\lambda$ is odd, put the $1$-part partition last in the above list.

Let
$$
X_i=\{B\in\wssp_{\lambda}\mid\text{$B$ and $A$ agree on $\rho^1$,
$\rho^2$, $\dots$, $\rho^i$}\}\,.
$$
We then have this chain of subsets:
$$
\wssp_{\lambda}=X_0\supseteq X_1\supseteq\dots\supseteq X_m=\{A\}\,.
$$
We wish to apply Corollary~\ref{Co:chain} to this chain, so we need to construct
$f_i$ which partially separates $(X_{i+1},X_i)$ on $[\lambda,\lambda^+]$.
Note that $X_i$ consists of all elements of $\wssp_{\lambda}$ which agree
with $A$ on $\{\rho^1,\dots,\rho^i\}$ and $X_i-X_{i+1}$ are those which
do not contain $\rho = \rho^{i+1}$.  That is, $B\in X_i$ means $B$
and $A$ agree within $\rho$, but for such $B$, $B\in X_{i+1}$ if and
only if $\rho\in B$.  These are exactly the conditions needed to apply
Lemma~\ref{Le:LRlemma}.

Let
$$
f_i=(c_A^{\lambda[\rho]}+1)s_{\lambda}-s_{\lambda[\rho]}\,.
$$
We now verify that $f_i$ partially separates $(X_{i+1}, X_i)$ on $[\lambda,\lambda^+]$,
thus completing the proof.

Clearly $f_i$ is Schur-integral and its support lies on $[\lambda,\lambda^+]$.

For $B\in\wssp_{\nu}$ with $\nu\in[\lambda,\lambda^+]$ and $\nu\neq\lambda$, we have $\innp{s_{\lambda}}{s_B}=0$
(by Proposition~\ref{Pp:domprop})
and $\innp{s_{\lambda[\rho]}}{s_B}\geq 0$ (since $s_B$ is Schur-positive).
Thus $\innp{f_i}{s_B}\leq 0$.

For $B\in Y-X$, $\innp{s_{\lambda}}{s_B}=1$ (by Proposition~\ref{Pp:domprop})
and $\innp{s_{\lambda[\rho]}}{s_B}=c_B^{\lambda[\rho]}$.  So by
Lemma~\ref{Le:LRlemma},
$\innp{f_i}{s_B}=c_A^{\lambda[\rho]}+1-c_B^{\lambda[\rho]}=0$.

For $B\in X$, $\innp{s_{\lambda}}{s_B}=1$ (by Proposition~\ref{Pp:domprop})
and $\innp{s_{\lambda[\rho]}}{s_B}=c_A^{\lambda[\rho]}$.  Then
$\innp{f_i}{s_B}=c_A^{\lambda[\rho]}+1-c_A^{\lambda[\rho]}=1$.

Therefore $f_i$ partially separates $(X_{i+1}, X_i)$ as required.

\end{proof}

We make a couple of observations about $\wssp_{\lambda}$ when $\lambda$
is distinct.  
If $\lambda$ is even ($n=2m$), then the elements of $\wssp_{\lambda}$ are
clearly counted by the Catalan numbers $C_m$.  If $\lambda$ is odd
($n=2m-1$) then the elements of
$\wssp_{\lambda}$ are
counted again by the Catalan numbers $C_m$.
Furthermore, for our ordering of the $\rho^i$'s,
we may use the natural Catalan recursion induced by our
realization of the partitions in $\wssp_{\lambda}$ as nestings.
See Exercise 6.19, part (o) in~\cite{Stan2}.

The awkward distinction between even $n$ and odd $n$ can be resolved
in several ways.  Our proof above placed the singleton part at the end
so that all the work had been accomplished before encountering it.  
Since a
$1$-part is incorporated in Lemma~\ref{Le:LRlemma}, this was not technically
necessary.  A
heuristic of the general (non-distinct) problem seems to be that the odd case
is  easier than the even case.

\section{A Littlewood-Richardson identity}\label{S:LRIdentity}

Our goal in this section is to prove
Lemma~\ref{Le:LRlemma}. This lemma will be a corollary of a stronger
theorem which we now describe.

We generalize somewhat the notation from Section~\ref{S:extreme}.
Suppose $A$ is a multiset in $\mathcal P_N$ (not necessarily $1$ or $2$
row partitions). As with $2$-part partitions, let
$\phi(A)$ be the partition defined by the parts of the
partitions in $A$.  
Let $n=l(\phi(A))$, the
number of parts of $\phi(A)$.

If $\phi(A)$ is distinct, the location in $\phi(A)$ of each part of
each partition in $A$ defines a set partition of $\{1,\dots,n\}$ with $m$ blocks,
where $m$ is the number of partitions in $A$.
If $\rho^i$ is a partition in $A$, let $\alpha^i$ denote the corresponding block of
the set partition.

For example, if $\rho^1=(8,3,1)$ and $\rho^2=(6,4)$, and $A=\{\rho^1,\rho^2\}$
(with $n=5$, $m=2$ and $N=22$), then $\phi(A)=(8,6,4,3,1)$,
and $\alpha^1=\{1,4,5\}$ and $\alpha^2=\{2,3\}$.

We will be forming tableaux
with content equal to various rearrangements of $\phi(A)$.  Since the elements of the 
blocks $\alpha^i$ will correspond to letters in such tableaux, we view these
blocks as alphabets.
When switching order between alphabets,
we will keep the letters within alphabets intact, rather than relabel. 
Also, note that since $\phi(A)$ is distinct, there is no ambiguity in the definition of 
the set partition $\{\alpha^1,\dots,\alpha^m\}$.

Now for $\lambda\partof N$ define $d_A^{\lambda}$ to be the number of SSYT 
$T$ of shape
$\lambda$, content $\phi(A)$, such that each word $w_{\alpha^i}(T)$ is LW.

Here is an example of such a SSYT for the above $A$ and $\lambda=(9,9,3,1)$:

$$
\begin{array}{ccccccccc}
\clr1&\clr1&\clr1&\clr1&\clr1&\clr1&\clr1&\clr1&\clb2\\
\clb2&\clb2&\clb2&\clb2&\clb2&\clb3&\clr4&\clr4&\clr4\\
\clb3&\clb3&\clb3\\
\clr5
\end{array}
$$

The words
$$
w_{\alpha^1}(T)=(\clr{1\,1\,1\,1\,1\,1\,1\,1\,4\,4\,4\,5})
$$
and
$$
w_{\alpha^2}(T)=(\clb{2\,3\,2\,2\,2\,2\,2\,3\,3\,3})
$$
are both lattice words.

The calculation of $d_A^{\lambda}$ is similar to the Littlewood-Richardson calculation, except
the content has been sorted into decreasing order
and the subwords corresponding to each $\rho\in A$ are scattered throughout the tableau.

In general $d_A^{\lambda}\neq c_A^{\lambda}$. We shall give examples of this inequality later 
in this section.
However, in one particular case, these two numbers are equal, and their equality allows us to calculate
$c_A^{\lambda}$ exactly.

Specifically, suppose $\lambda\partof N$, $\lambda$ with distinct parts, and $\nu=(\lambda_i,\lambda_j)$
is a two-part partition using two parts of $\lambda$. We consider SSYT of shape $\lambda[\nu]$ and content
$\lambda$.

We first characterize these tableaux.  
Let $T$ be such a tableau.  Suppose $u<i$ or
$u\geq j$. Then all the entries in row $u$ of $T$ are $u$'s.  Furthermore, if $i\leq u<j$, then all the entries in row
$u$ of $T$ are $u$'s, except for possibly the last one. We call the last cell in each of these rows
\textit{special}.  A special cell does not have a cell
below it, since the only possible repeated rows in $\lambda[\nu]$ are rows $i-1$ and $i$ or rows $j$ and $j+1$.

The entries in the special cells (reading from row $j-1$ up to
row $i$) will form a permutation of $\{(i+1),\dots,j\}$.  However, for purposes which will soon become clear,
we write this as a permutation of $\{i,\dots,j\}$ by inserting an $i$ in the next-to-last position.
We call this permutation $\pi=\pi_1,\dots,\pi_{j-i+1}$. Note that since $\pi_1$ is the entry in row $j-1$,
$\pi_1$ can only be $j$ or $j-1$.  Similarly, $\pi_2$ can only be $j-2$ or the value not used for $\pi_1$.
Continuing in this fashion,
there are only two choices for each entry in $\pi$, except the last two.
This description completely characterizes these SSYT.

\begin{Pp}\label{Pp:SSYTcount}
The number of SSYT of shape $\lambda[\nu]$ and content $\lambda$ is $2^{j-i-1}$ (that is, the Kostka number,
$K_{\lambda[\nu],\lambda}=2^{j-i-1}$).
\end{Pp}

Suppose $i\leq u<v\leq j$.  If $u\neq i$, it follows from the discussion above that $w_{\{u,v\}}(T)$ will be LW if and only if
$u$ follows $v$ in $\pi$.  Our insertion of $i$ into $\pi$ extends this to the $u=i$ case.

For example, let $\lambda=(7,5,3,2,1)$ and $\nu=(5,1)$.  There are $4$ SSYT of shape $\lambda[\nu]$
and content $\lambda$.  We list them with their corresponding $\pi$:
$$
A_1=
\begin{array}{ccccccc}
1&1&1&1&1&1&1\\
2&2&2&2&\clr2&\clr5\\
3&3&\clr3\\
4&\clr4\\
\end{array}
\pi=\clr{4\,3\,2\,5}
$$

$$
A_2=
\begin{array}{ccccccc}
1&1&1&1&1&1&1\\
2&2&2&2&\clr2&\clr3\\
3&3&\clr5\\
4&\clr4\\
\end{array}
\pi=\clr{4\,5\,2\,3}
$$

$$
A_3=
\begin{array}{ccccccc}
1&1&1&1&1&1&1\\
2&2&2&2&\clr2&\clr4\\
3&3&\clr3\\
4&\clr5\\
\end{array}
\pi=\clr{5\,3\,2\,4}
$$

$$
A_4=
\begin{array}{ccccccc}
1&1&1&1&1&1&1\\
2&2&2&2&\clr2&\clr3\\
3&3&\clr4\\
4&\clr5\\
\end{array}
\pi=\clr{5\,4\,2\,3}
$$
From this example it is easy to verify, for instance, that $w_{\{2,3\}}(A_1)$ and
$w_{\{2,3\}}(A_3)$ have the LW property but
$w_{\{2,3\}}(A_2)$ and $w_{\{2,3\}}(A_4)$ do not.
Also $w_{\{3,4\}}(A_1)$,  $w_{\{3,4\}}(A_2)$ and
$w_{\{3,4\}}(A_4)$ have the LW property, but 
$w_{\{3,4\}}(A_3)$ does not.  This is reflected in the corresponding $\pi$'s.

\begin{Th}\label{Th:LRCount}
Suppose $\lambda\partof N$ is distinct, $\nu=(\lambda_i,\lambda_j)$, and $A$ is a multiset
of partitions, with
$\phi(A)=\lambda$. Then
$$
c_A^{\lambda[\nu]}=d_A^{\lambda[\nu]}\,.
$$
Furthermore, $c_A^{\lambda[\nu]}$ is the number of permutations $\pi=\pi_1,\dots,\pi_{j-i+1}$
of $\{i,\dots,j\}$ such that
\begin{enumerate}
\item
\begin{align*}
&\pi_t\geq j-t\text{ for }t=1,2,\dots,j-i-1\\
&\pi_{j-i}=i\\
&\pi_{j-i+1}\text{ is the only remaining value.}
\end{align*}
\item
if $u<v$ are in $\alpha^i$, then
$v$ appears before $u$ in $\pi$.
\end{enumerate}
\end{Th}
\begin{proof}
The characterization of the permutations follows from the discussion above.

Our strategy in proving the equality is to swap alphabets using Theorem~\ref{Th:swapprop}.
Starting with a SSYT $T$ counted by $d_A^{\lambda[\nu]}$, as noted above, we 
view the blocks of the
set partition $\{\alpha^1,\dots,\alpha^m\}$ as alphabets.  Each alphabet
is scattered throughout the tableau, not contiguously, so Theorem~\ref{Th:swapprop} does not
directly apply.  To circumvent this, we use Theorem~\ref{Th:swapprop} on a portion of the tableau
where the alphabets do appear contiguously.

Suppose we have constructed a new tableau $S$ from $T$, with special row $k$.
Suppose $S$ and $k$ have the following properties.
\begin{enumerate}
\item
The shape of $S$ is $\lambda[\nu]$.
\item
The content of $S$ is a rearrangement of $\lambda$. As noted above, we keep the same
alphabets as $T$, but reordered without relabeling.
\item
Each subword corresponding to each alphabet $\alpha^u$ is a lattice word.
\item
At and above row $k$, $S$ is identical to $T$. We shall say $S$ is
\textit{pristine above} $k$.
\item
The letters $>k$ in each alphabet $\alpha^u$ appear contiguously in $S$.
We shall say $S$ is \textit{clustered below} $k$. Note that ``below'' here
means ``greater than''.  Letters $>k$ can appear in the pristine portion of $S$.
\end{enumerate}

We will call such $S$ $k$-\textit{partially LR}.

Let $n=l(\lambda[\rho])$. (Note that $n$ may or may not equal $l(\lambda)$, depending on
whether $\nu$ includes the last part of $\lambda$ and that part is $1$.)
Initially, $T$ is $n$-partially LR.  Also note that if $S$ is $0$-partially LR,
then it is counted by $c_A^{\lambda[\nu]}$.

Now suppose
$S$ is $k$-partially LR. We show how to construct a new tableau
$S'$ which is $(k-1)$-partially LR.  Iterating  will  give
 a $0$-partially LR tableau.  Each step in this process will be seen to be
reversible, thus proving the result.

Suppose $k$ is in block $\alpha^{u_0}$.  If $k$ is the largest in its block, then $S$ will be clustered
below $k-1$ and obviously pristine above $k-1$, so let $S'=S$.

Now suppose $k$ is not the largest in its block. Since $S$ is pristine above $k$,
except possibly at the special cells,
no letter greater than $k$ appears above row $k$. Any swaps that take place between letters
greater than $k$ will not affect the special cells, since there is nothing below them,
and so the tableau above row $k$ will remain
pristine.

Define for each $u$, $\beta^u=\alpha^u\cap\{k+1,\dots,n\}$.
Since $k$ is not largest in its block, $\beta^{u_0}$ is non-empty.  Since $S$ is clustered
below $k$, define $S^u$ to be the skew subtableau of $S$ containing the letters
in $\beta^u$.  We then apply Theorem~\ref{Th:swapprop}
to move $S^{u_0}$ through the $S^u$ until the smallest
letter in $\beta^{u_0}$ and $k$ are contiguous.
This new tableau is $S'$.
As noted above, these moves will leave the rows above $k$
unaffected, so that $S'$ is pristine above $k$.
Furthermore, each of these moves will preserve the LW property
within each $S^u$ (by Theorem~\ref{Th:swapprop}), even if
some of the letters appear in special cells in the pristine region.

It remains to verify that the LW property holds in $S'$ for each $\alpha^u$.
We say $x, y\in\alpha^u$ are \textit{sequential} if there is no
$z\in\alpha^u$ between $x$ and $y$. Note that this is different than
contiguous:  contiguous means there is no $z$ between
two entries in the tableau, while sequential means there is no $z$
between two entries in the alphabet.  Two letters could be
sequential  (same alphabet) but not contiguous (intervening letters from
different alphabets).  And two letters could be contiguous (no intervening
letter within the tableau) but not sequential (from different alphabets).

It suffices to show the LR
property holds in $S'$ for all sequential pairs in $\alpha^u$.

We consider three possible cases for sequential pair $x,y$.  
First, suppose $x,y>k$.  Then $x$ and $y$ will have the LW
property in $\alpha^u$ in $S$ if and only if
they have the LW property in $S'$ by Theorem~\ref{Th:swapprop}.

Second, suppose $x<k$ and $y\leq k$.
Then neither $x$ nor $y$ is in $\beta^u$, so their relative positions will be
unchanged by swapping alphabets.

Third, suppose $x\leq k$ and $y>k$.
All the $x$'s appear in the pristine portion of $S$ and so remain unchanged.
All the $y$'s (except for possibly one in a special cell) are below row $k$.
Therefore if $x$ and $y$ had the LW property before a swap, it would have it
afterwards, and conversely.
\end{proof}
We illustrate with some examples.  First, suppose 
\begin{gather*}
\alpha^1=\{\clr4,\clr8,\clr9,\clr{13},\clr{14},\clr{16}\}\\
\alpha^2=\{\clb2,\clb5,\clb6,\clb7,\clb{11},\clb{15}\}\\
\alpha^3=\{\clg1,\clg3,\clg{10},\clg{12}\}\,.
\end{gather*}
Suppose $S$ is $\clb7$-partially LR.
Then
\begin{gather*}
\beta^1=\{\clr8,\clr9,\clr{13},\clr{14},\clr{16}\}\\
\beta^2=\{\clb{11},\clb{15}\}\\
\beta^3=\{\clg{10},\clg{12}\}\,.
\end{gather*}

Furthermore, suppose
$\beta^1<\beta^3<\beta^2$.  That is, the order on the entries in $S$ due to previous switches is
$$
\clr8<\clr9<\clr{13}<\clr{14}<\clr{16}<\clg{10}<\clg{12}<\clb{11}<\clb{15}\,.
$$
Since $\clb7$ belongs to
$\alpha^2$, $\beta^2$ will switch with $\beta^3$ then $\beta^1$.  The new order will be
$$
\clb7<\clb{11}<\clb{15}<\clr8<\clr9<\clr{13}<\clr{14}<\clr{16}<\clg{10}<\clg{12}\,.
$$

In this new order, $\clb{15}$ and $\clr8$ are contiguous but not sequential (different alphabets).
And $\clr4$ and $\clr8$ are sequential but not contiguous (separated by
$\clb5, \clb6, \clb7, \clb{11},\clb{15}$).

Our second example illustrates the switching within a tableau.
Let
$$
\lambda=(9,8,7,6,5,4,3,2,1)
$$
and $\mu=(9,1)$. Then 
$$
\lambda[\mu]=(10,8,7,6,5,4,3,2)\,.
$$
Let $A=\{\rho^1,\rho^2,\rho^3\}$, with $\rho^1=(9,5,3)$, $\rho^2=(8,7,4,2)$ and $\rho^3=(6,1)$.  Note that 
$\phi(A)=\lambda$.
Also, since $9$, $5$ and $3$ are in the $1$, $5$ and $7$ positions of $\lambda$,
$\alpha^1=\{\clr1,\clr5,\clr7\}$.  Similarly, $\alpha^2=\{\clb2,\clb3,\clb6,\clb8\}$ and $\alpha^3=\{\clg4,\clg9\}$.
Initially let
$$
T=
\begin{array}{cccccccccc}
\clr1&\clr1&\clr1&\clr1&\clr1&\clr1&\clr1&\clr1&\clr1&\clg4\\
\clb2&\clb2&\clb2&\clb2&\clb2&\clb2&\clb2&\clb2\\
\clb3&\clb3&\clb3&\clb3&\clb3&\clb3&\clb3\\
\clg4&\clg4&\clg4&\clg4&\clg4&\clr5\\
\clr5&\clr5&\clr5&\clr5&\clg9\\
\clb6&\clb6&\clb6&\clb6\\
\clr7&\clr7&\clr7\\
\clb8&\clb8
\end{array}
$$
Note that $i=1$, $j=9$, and $\pi=8\,7\,6\,9\,5\,3\,2\,1\,4$.
Now suppose we have constructed the corresponding $3$-partially LR tableau $S$.
In fact, suppose
$$
S=
\begin{array}{cccccccccc}
\clr1&\clr1&\clr1&\clr1&\clr1&\clr1&\clr1&\clr1&\clr1&\clg4\\
\clb2&\clb2&\clb2&\clb2&\clb2&\clb2&\clb2&\clb2\\
\clb3&\clb3&\clb3&\clb3&\clb3&\clb3&\clb3\\
\hline
\clg4&\clg4&\clg4&\clg4&\clg4&\clr5\\
\clg9&\clr5&\clr5&\clr5&\clr5\\
\clr7&\clr7&\clr7&\clb6\\
\clb6&\clb6&\clb6\\
\clb8&\clb8
\end{array}
$$

Then $\beta^1=\{\clr5,\clr7\}$, $\beta^2=\{\clb6,\clb8\}$ and $\beta^3=\{\clg4,\clg9\}$. Note that
at this stage, $\beta^3<\beta^1<\beta^2$. 
Since $\clb3$ belongs to $\alpha^2$, $S^2$ will swap with $S^1$, then $S^3$, to form
$$
S'=
\begin{array}{cccccccccc}
\clr1&\clr1&\clr1&\clr1&\clr1&\clr1&\clr1&\clr1&\clr1&\clg4\\
\clb2&\clb2&\clb2&\clb2&\clb2&\clb2&\clb2&\clb2\\
\clb3&\clb3&\clb3&\clb3&\clb3&\clb3&\clb3\\
\hline
\clb6&\clb6&\clb6&\clb6&\clr5&\clr5\\
\clb8&\clb8&\clr5&\clr5&\clg4\\
\clr5&\clr7&\clr7&\clg4\\
\clr7&\clg4&\clg4\\
\clg4&\clg9
\end{array}
$$

Note that $T$, $S$ and $S'$ all have the appropriate LW property.

The new tableau $S'$ will be pristine above $k-1$ and clustered below $k-1$.

Lemma~\ref{Le:LRlemma} now follows as a corollary
\begin{proof}[Proof of Lemma~\ref{Le:LRlemma}]
By Theorem~\ref{Th:LRCount}, $c_A^{\lambda[\rho]}$ (resp. $c_B^{\lambda[\rho]}$)
counts permutations $\pi$
of $\{i,\dots,j\}$ such that $\pi_i\geq j-i$ for $i=1,2,\dots,j-i-1$ and $\pi_{j-i}=i$,
and if $(\lambda_r,\lambda_s)\in A$ (resp. $\in B$), then
$s$ appears before $r$ in $\pi$.  It is clear that exactly one such permutation 
has $i$ appearing before $j$, namely $j-1,j-2,\dots,i+1,i,j$. This permutation
is counted in $c_B^{\lambda[\rho]}$ but not in $c_A^{\lambda[\rho]}$. 

If $j=i+1$, then there is exactly one SSYT, $T$, and $\pi=(i,i+1)$, so this $T$ is LW
for $B$ but not for $A$.
\end{proof}

We conclude with two examples which illustrate how special is the case of Theorem~\ref{Th:LRCount}.
First, to demonstrate that the content must be in decreasing order,
let $\mu=(4,4,1,1)$ with $A=\{(4,1),(3,2)\}$.  Then
$\lambda=\phi(A)=(4,3,2,1)$.  Note that $\mu=\lambda[(3,2)]$.  From Theorem~\ref{Th:LRCount},
we know $c_A^{\mu}=d_A^{\mu}=0$ and $K_{\mu,\lambda}=1$. 
However, if we use a different order, $(3,4,1,2)$, for the content, we have
$$
T=
\begin{array}{cccc}
\clr1&\clr1&\clr1&\clb2\\
\clb2&\clb2&\clb2&\clr4\\
\clb3\\
\clr4
\end{array}
$$
The corresponding words in this tableau are LW.

The second example illustrates how important it is that the shape be only slightly different than the content.
Take
$\lambda=(6,5,4,3,2,1)$, $A=\{(6,1),(5,4),(3,2)\}$ and $\mu=(7,7,5,2)$.
Note that $\lambda$ is distinct and $A$ is nested, but $\mu$ is not $\lambda[\rho]$ for any
possible $\rho$.
Now let
$$
T=
\begin{array}{ccccccc}
\clr1&\clr1&\clr1&\clr1&\clr1&\clr1&\clg4\\
\clb2&\clb2&\clb2&\clb2&\clb2&\clg4&\clr6\\
\clb3&\clb3&\clb3&\clb3&\clg4\\
\clg5&\clg5\\
\end{array}
$$
Clearly $T$ has content $\lambda$ and the words corresponding to $A$ are LW.
However, repeated switches yields this tableau (with content
$(6,1,5,4,3,2)$):
$$
S=
\begin{array}{ccccccc}
\clr1&\clr1&\clr1&\clr1&\clr1&\clr1&\clr6\\
\clb2&\clb2&\clb2&\clb2&\clb2&\clg4&\clg4\\
\clb3&\clb3&\clb3&\clb3&\clg4\\
\clg5&\clg5\\
\end{array}
$$
Note that $S$ is not LW. In fact, $d_A^{\mu}=15$ while
$c_A^{\mu}=13$.

\section{Nestings and plane partitions}\label{S:planepart}

There is an interesting connection between the nested sets
of partitions described in Conjecture~
\ref{Cj:firstconj} and
and plane partitions.
See~\cite{Stan1} for the relevant definitions associated with plane partitions.

\begin{Pp}\label{Pp:planepartition}
If $\lambda\vdash N$ has $2m$ parts, then there is a one-to-one correspondence
$\psi$ between elements of $\wssp_{\lambda}$ and
plane partitions of $N$ with shape
$(m,m)$ and parts $\lambda_i$.
\end{Pp}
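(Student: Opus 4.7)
The plan is to exhibit an explicit bijection $\psi$. Given $A \in \wssp_{\lambda}$, I set $\psi(A)$ to be the $2 \times m$ array whose first row lists the $m$ larger parts $\lambda^{(i)}_1$ of the partitions $\lambda^{(i)} \in A$ in weakly decreasing order, and whose second row lists the $m$ smaller parts $\lambda^{(i)}_2$ in weakly decreasing order. The rows are weakly decreasing by construction, and the column inequality follows from a pigeonhole: if the $i$th entry of row two exceeded the $i$th entry of row one, then at least $i$ of the $\lambda^{(j)}_2$ would exceed the threshold, which combined with $\lambda^{(j)}_1 \geq \lambda^{(j)}_2$ forces at least $i$ of the $\lambda^{(j)}_1$ to exceed it as well, contradicting the sort order on row one. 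Notice that this step does not use nestedness.

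For the inverse I would read the cells of a plane partition $P$ in a canonical order: process entry values from largest to smallest and, within a fixed value $v$, take all row-1 cells (left to right) before all row-2 cells. Recording $U$ or $D$ for each cell according to its row yields a word $w \in \{U, D\}^{2m}$, and the column inequality of $P$ forces $w$ to be a Dyck word. The standard non-crossing matching between the $m$ openers and $m$ closers of $w$ then determines a set of two-part partitions: a matched pair with row-1 value $v_1$ and row-2 value $v_2$ contributes the partition $(v_1, v_2)$, which is valid because $U$ precedes $D$ in the reading order, forcing $v_1 \geq v_2$. I let $\psi^{-1}(P)$ denote this set.

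What remains is to verify that $\psi^{-1}(P)$ is nested and that $\psi$ and $\psi^{-1}$ are mutually inverse. For nesting, given two matched pairs at positions $p_1 < q_1$ and $p_2 < q_2$, the non-crossing property forces either $q_1 < p_2$ (disjoint) or $p_1 < p_2 < q_2 < q_1$ (one inside the other); in each case the weakly decreasing reading order combined with the row-1-before-row-2 tie-break rules out the strict chain $\lambda_1 > \mu_1 \geq \lambda_2 > \mu_2$ of condition~(i) of Conjecture~\ref{Cj:firstconj}. The identity $\psi \circ \psi^{-1} = \mathrm{id}$ is immediate because the canonical reading simply reassembles the already sorted rows of $P$; for $\psi^{-1} \circ \psi = \mathrm{id}$ I would argue by induction on $m$, peeling off the innermost matched pair (an adjacent $UD$ in $w$) and checking that it corresponds to a pair in $A$.

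The main obstacle is the bookkeeping for repeated parts. Equal values can appear in both rows of $\psi(A)$, and a value $v$ may simultaneously be the larger entry of one pair and the smaller entry of another; the row-1-before-row-2 tie-break is exactly what excludes the ``touching endpoint'' configurations (such as intervals $[3,5]$ and $[1,3]$) that form valid non-crossing matchings in the unlabelled sense but satisfy condition~(i) and hence are not nested.
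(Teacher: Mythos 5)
Your forward map is exactly the paper's: the published proof consists solely of the two sentences defining $\psi$ (sort the first parts into row one and the second parts into row two), with no verification that the result is a plane partition and no argument for bijectivity. So everything after your first paragraph is content the paper omits, and it is essentially all correct. The pigeonhole for the column inequalities is right and, as you observe, independent of nestedness (consistent with the paper's remark that $\psi$ is only an injection when $\lambda$ is odd). Your inverse via the reading word and its unique non-crossing matching is also sound: the Dyck condition follows from the column inequalities; positionally nested matched pairs carry values $\alpha_1\geq\beta_1\geq\beta_2\geq\alpha_2$, which cannot realize condition~(i) of Conjecture~\ref{Cj:firstconj}; and positionally disjoint pairs could realize it only when $\alpha_2=\beta_1$, which your row-one-before-row-two tie-break excludes, since the opener of value $\beta_1$ would then have to precede the closer of value $\alpha_2$. (You are also implicitly using Remark~\ref{Rm:evenodd}, that every member of $A$ has two parts when $\lambda$ is even.)

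The only incomplete step is the one you flag: that the innermost matched $UD$, with values $(u,d)$, is actually a member of $A$. This is the crux of $\psi^{-1}\circ\psi=\mathrm{id}$ and the only place where nestedness of $A$ enters, and it does go through. From the reading order one checks that no cell of $P$ has value strictly between $d$ and $u$, that no closer has value in $(d,u]$, and that no opener has value in $[d,u)$; hence every $\rho\in A$ with $\rho_1=u$ has $\rho_2\le d$ and every $\sigma\in A$ with $\sigma_2=d$ has $\sigma_1\ge u$. If $(u,d)\notin A$, these inequalities are strict for some $\rho$ and $\sigma$, giving $\sigma_1>\rho_1=u\ge d=\sigma_2>\rho_2$, which is exactly condition~(i). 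Deleting this pair removes two adjacent letters of the word and preserves both the plane-partition property and the remaining non-crossing matching, so your induction closes. (Alternatively, you can avoid the induction: the matching induced by $A$ itself can be made non-crossing, since any crossing either violates condition~(i) or involves two openers, or two closers, of equal value that may be swapped; uniqueness of the non-crossing matching of a Dyck word then forces it to coincide with $\psi^{-1}(P)$.)
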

\begin{proof}
For each $\rho$ in $A$, place $\rho_1$ in the first row of $\psi(A)$
and $\rho_2$ in
the second row.  Then write the rows in decreasing order.
\end{proof}

We illustrate this bijection with the following table, when $\lambda=12^334^25$.
There are four elements of $\wssp_{\lambda}$.
$$
\begin{array}{|c|c|c|}
\hline
A\in\wssp_{\lambda}&\psi(A)\\
\hline
\{(5,1),(4,2),(4,2),(3,2)\}&
\begin{matrix}
5&4&4&3\\
2&2&2&1
\end{matrix}\\
\hline
\{(5,1),(4,2),(4,3),(2,2)\}&
\begin{matrix}
5&4&4&2\\
3&2&2&1
\end{matrix}\\
\hline
\{(5,1),(4,4),(3,2),(2,2)\}&
\begin{matrix}
5&4&3&2\\
4&2&2&1
\end{matrix}\\
\hline
\{(5,3),(4,4),(2,1),(2,2)\}&
\begin{matrix}
5&4&2&2\\
4&3&2&1
\end{matrix}\\
\hline
\end{array}
$$

When the number of parts of $\lambda$ is odd, the bijection
$\psi$ becomes an injection.
For example, suppose $\lambda=1^32^334^2$.
$$
\begin{array}{|c|c|c|}
\hline
A\in\wssp_{\lambda}&\psi(A)\\
\hline
\{(4,2),(4,2),(3,2),(1),(1,1)\}&
\begin{matrix}
4&4&3&1&1\\
2&2&2&1&0
\end{matrix}\\
\hline
\{(4,2),(4,3),(2,2),(1),(1,1)\}&
\begin{matrix}
4&4&2&1&1\\
3&2&2&1&0
\end{matrix}\\
\hline
\{(4,4),(3,2),(2,2),(1),(1,1)\}&
\begin{matrix}
4&3&2&1&1\\
4&2&2&1&0
\end{matrix}\\
\hline
\{(4,4),(3),(2,1),(2,1),(2,1)\}&
\begin{matrix}
4&3&2&2&2\\
4&1&1&1&0
\end{matrix}\\
\hline
\{(4,4),(3),(2,1),(2,2),(1,1)\}&
\begin{matrix}
4&3&2&2&1\\
4&2&1&1&0
\end{matrix}\\
\hline
\end{array}
$$
To illustrate that the mapping to plane partitions is not
a bijection, the plane partition
$$
\begin{matrix}
4&4&3&2&1\\
2&2&1&1&0
\end{matrix}
$$
does not appear in this list.

\section{Remarks and acknowledgements}\label{S:remarks}

It is easy to describe the extreme vectors of
$\mathcal C_N^{N-1}$.  These vectors are all the Schur functions
except $s_{1^n}$, which is replaced by $s_{1^{n-1}}s_1$.
In a similar (but more complicated) fashion, it is possible
to describe the extreme vectors of $\mathcal C_N^{N-2}$
and $\mathcal C_N^{N-3}$.

However, we have been unable to replace the conditions in
Conjecture~\ref{Cj:firstconj} with general conditions for
the cone $\mathcal C_N^k$.  Even the case $k=3$
seems difficult, requiring that the syzygies
in Equations~\eqref{eq:syzygy1} to~\eqref{eq:syzygy4}
be replaced with appropriate syzygies for 3-row partitions.

Lemma~\ref{Co:chain} gives a general recipe for constructing
separating vectors.  Unfortunately, if $\lambda$ has repeated
parts, the chain of subsets of $\wssp_{\lambda}$
and the required partially separating
vectors seem much more elusive than in the case of distinct parts.

The author would like to thank Alexander Yong for suggesting
placing the log concave problem into the context of symmetric
functions.

\end{document}